\numberwithin{equation}{section}
\newcommand\myworries[1]{\textcolor{red}{#1}}
\renewcommand\myworries[1]{}
\newtheorem{thm}{Theorem}[section]
\newcommand{\bt}{\begin{thm}}
\newcommand{\et}{\end{thm}}
\newtheorem{conj}[thm]{Conjecture}
\newtheorem{cor}[thm]{Corollary}   %remember switch all {coro} to {cor}
\newcommand{\bc}{\begin{cor}}
\newcommand{\ec}{\end{cor}}
\newtheorem{lem}[thm]{Lemma}   %remember to switch all {lem} to {lem}
\newcommand{\bl}{\begin{lem}}
\newcommand{\el}{\end{lem}}
\newtheorem{prop}[thm]{Proposition}
\newcommand{\bp}{\begin{prop}}
\newcommand{\ep}{\end{prop}}
\newtheorem{defn}[thm]{Definition}
\newcommand{\bd}{\begin{defn}}    % This produces an error????
\newcommand{\ed}{\end{defn}}
\newtheorem{rmrk}[thm]{Remark}   %remember to switch all {rmrk} to {rmrk}
\newcommand{\br}{\begin{rmrk}}
\newcommand{\er}{\end{rmrk}}
\newcommand{\thmref}[1]{Theorem~\ref{#1}}
\newcommand{\lemref}[1]{Lemma~\ref{#1}}
\newcommand{\defref}[1]{Definition~\ref{#1}}
\newcommand{\be}{\begin{equation}}
\newcommand{\ee}{\end{equation}}
\newcommand{\N}{\mathbb{N}}
\newcommand{\R}{\mathbb{R}}
\newcommand{\g}{\overline{g}}
\newcommand{\on}{\overline{\nabla}}
\newcommand{\vol}{{\rm vol}}
\newcommand{\Sc}{{\rm Sc}}
\newcommand{\Ric}{{\operatorname{Ric}}}
\begin{document}

\title[PSC with conical singularity]{Positive scalar curvature and isolated conical singularity}

\author{Xianzhe Dai}
\address{
Department of Mathematics,
University of California, Santa Barbara
CA93106, USA}
\email{dai@math.ucsb.edu}

\author{Yukai Sun}
\address{Key Laboratory of Pure and Applied Mathematics,
School of Mathematical Sciences, Peking University, Beijing, 100871, P. R. China
}
\email{sunyukai@math.pku.edu.cn}

\author{Changliang Wang}
\address{
School of Mathematical Sciences and Institute for Advanced Study, Key Laboratory of Intelligent Computing and Applications(Ministry of Education), Tongji University, Shanghai 200092, China}
\email{wangchl@tongji.edu.cn}

\date{}

\keywords{}

\begin{abstract}
We prove a Geroch type result for isolated conical singularity. Namely, we show that there is no Riemannian metric $g$ on $ X \# T^n $ with an isolated conical singularity which has nonnegative scalar curvature on the regular part, and is positive at some point. In particular, this implies that there is no metric on tori with an isolated conical singularity and positive scalar curvature. We also prove that a scalar flat Riemannian metric $g$ on $X \# T^n$ with finitely many isolated conical singularities must be flat, and extend smoothly across the singular points. We do not a priori assume that a conically singular point on $X$ is a manifold point; i.e., the cross section of the conical singularity may not be spherical.
\end{abstract}

\maketitle

\tableofcontents

\section{Introduction}

One of the fundamental results in the study of scalar curvature is that the $n$-torus $T^n$ does not admit any complete Riemannian metric of positive scalar curvature. This result, was known as the Geroch conjecture, was first proved by Schoen-Yau \cite{SY-MM-79} in dimensions $ n \leq 7 $ using the minimal surface method, and later by Gromov-Lawson \cite{GL-Annals-80, GL-IHESP-83} in arbitrary dimensions using the Dirac operator method. More generally, Gromov-Lawson\cite{GL-IHESP-83} proved that a $\Lambda^2$-enlargeable manifold does not admit any complete metric with positive scalar curvature. For the notion of $\Lambda^2$-enlargeability, we refer to \cite{GL-IHESP-83}, also \cite{Wang-Zhang-22} in which this notion is referred as area enlargeability. We note that tori $T^n$ are $\Lambda^2$-enlargeable. If $W$ is a closed and $\Lambda^2$-enlargeable manifold, then for any closed spin manifold $M$ of the same dimension, the connected sum $M \# W$ is also $\Lambda^2$-enlargeable, in particular, so is $M \# T^n$. In \cite{Wang-Zhang-22}, Wang-Zhang extended Gromov-Lawson's result to allow the spin manifold $M$ to be non-compact. They proved that if $W$ is a closed $\Lambda^2$-enlargeable manifold and $M$ is an arbitrary spin manifold of the same dimension, then the connected sum $M \# W$ does not admit any complete metric of positive scalar curvature. In the non-spin setting, Chodosh-Li \cite{Chodosh-Li-Annals} proved that for $3 \leq n \leq 7$ and any manifold $M$ of dimension $n$, there is no complete metric of positive scalar curvature on $M \# T^n$. In dimension three, this was also proved by Lesourd-Unger-Yau \cite{LUY-JDG}. Recently, Chen-Chu-Zhu \cite{CCZ2023} proved that for a $n$-dimensional aspherical manifold $N$ and any $n$-dimensional manifold $M$, the connected sum $M\# N$ does not admit a complete positive scalar curvature metric for $3\leq n\leq 5$.

The resolution of the Geroch Conjecture and its generalizations have played crucial roles in many important studies concerning scalar curvature, for example Schoen-Yau's resolution of the positive mass theorem \cite{SY-PMT} and Schoen's resolution of the Yamabe problem \cite{Schoen-Yamabe-problem}. Recently, Zhu \cite{Zhu-IRMN-23} has proved that the generalized Geroch conjecture implies the positive mass theorem with arbitrary ends, which has also been proved by Lesourd-Unger-Yau \cite{LUY-JDG}.

Motivated by the investigation of weak notions of nonnegative scalar curvature, e.g., Gromov's polyhedral comparison theory \cite{Gromov14}, and the positive mass theorem for singular metrics, it is natural to study the existence problem of incomplete or singular Riemannian metrics with positive scalar curvature. In this paper, we focus on metrics with isolated conical singularities, and our main result is as follows.
\begin{thm}\label{thm-main-1}
Let $M$ be an $n$-dimensional closed smooth manifold with $n\geq 3$. Assume that either %the dimension 
$n\leq 7$, or $M$ is spin. %manifold of arbitrary dimension. 
Then there is no Riemannian metric $g$ on $M \# T^n$ with finitely many isolated conical singularities, %in $M$
 and positive scalar curvature on the regular part. Moreover, the conically singular metric with nonnegative scalar curvature must be flat everywhere on $M \# T^n$, and extend smoothly across the singular points.
\end{thm}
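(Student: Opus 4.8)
The plan is to combine the Gromov--Lawson / Chodosh--Li enlargeability obstruction with a precise analytic study of the conical ends, the key point being that an isolated conical singularity with a scalar curvature lower bound does not create new ``ends'' in a way that would defeat the obstruction. I would proceed in two stages: first a \emph{removable singularity / rigidity} stage showing that nonnegative scalar curvature near an isolated conical singularity, together with the Geroch-type obstruction being satisfied, forces the metric to be flat and smooth across the singular points; second, a \emph{dichotomy} stage deriving the nonexistence of the strictly positive case.

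First I would set up the conical model: near each singular point $p_i$ the metric has the form $g = dr^2 + r^2 h_i + o(1)$ on $(0,\varepsilon)\times \Sigma_i$, where $(\Sigma_i, h_i)$ is a closed $(n-1)$-manifold (the cross section, not assumed to be a sphere). A computation of the scalar curvature of the exact cone $dr^2 + r^2 h$ gives $\Sc = r^{-2}\big(\Sc_{h} - (n-1)(n-2)\big)$, so the sign condition $\Sc \ge 0$ on the regular part forces, in the limit, $\Sc_{h_i} \ge (n-1)(n-2)$; hence each cross section has positive scalar curvature. This is the crucial structural input: it lets me use the cross sections to control the geometry of the ends. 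I would then show, using either a capacity/removability argument for the relevant elliptic operator (the conformal Laplacian in the non-spin low-dimensional case, the Dirac operator in the spin case) or a direct cut-and-fill construction, that one may modify $g$ in a small neighborhood of each $p_i$ to obtain a \emph{complete} (or closed, after filling) metric on a manifold still satisfying the enlargeability hypotheses, with scalar curvature $\ge -\delta$ for arbitrarily small $\delta$, and $>0$ somewhere if $g$ had $\Sc>0$ somewhere. Concretely, because $\Sc_{h_i} > (n-1)(n-2)$, the truncated cone $[\varepsilon/2,\varepsilon]\times\Sigma_i$ can be smoothly capped by a metric of positive (or at worst slightly negative, controllably) scalar curvature on a disk-like filling, or alternatively one attaches a long almost-cylindrical neck; this is the standard Gromov--Lawson surgery estimate adapted to a non-spherical cross section, which goes through precisely because the cross section has $\Sc > (n-1)(n-2)$.

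With the singularities removed/filled, I would invoke the relevant nonexistence theorem. In the spin case, $M\#T^n$ is $\Lambda^2$-enlargeable (as recalled in the introduction), and the Wang--Zhang extension applies even if the filled-in manifold is noncompact; in the case $n\le 7$, I would appeal to the Chodosh--Li result that $M\#T^n$ admits no complete metric of positive scalar curvature. To handle the scalar-flat rigidity, the standard argument is: if $\Sc(g)\ge 0$ with $\Sc(g)>0$ somewhere, a conformal change supported away from the singularities produces a metric with $\Sc>0$ everywhere on the regular part and still conical, contradicting the nonexistence already established; hence $\Sc(g)\equiv 0$. Then the enlargeability obstruction in its rigidity form (the harmonic spinor / stable minimal slicing must be parallel, Ricci-flat, and in fact flat because of the torus factor) forces $g$ to be flat on the regular part. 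A flat metric with an isolated conical singularity must have cross section a spherical space form of constant curvature $1$; but $\Sc_{h_i} = (n-1)(n-2)$ together with flatness and an Ricci/second-variation argument (or a holonomy/monodromy argument around the cone) forces the cross section to be the round sphere and the cone angle to be $2\pi$, so the singularity is a smooth flat point and $g$ extends smoothly.

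The main obstacle I expect is the first stage: making the surgery/removal rigorous \emph{without} assuming the cross section is a sphere and \emph{without} losing the sign (or controlling its loss) of the scalar curvature, and simultaneously in both the spin and non-spin regimes. In the non-spin case one does not have the Dirac operator, so the removable-singularity step must be done geometrically (Gromov--Lawson type neck attachment) and one must verify that the Chodosh--Li $\mu$-bubble / minimal hypersurface method still applies after the modification; controlling the asymptotics of $g$ near $p_i$ precisely enough (e.g.\ showing the metric is genuinely asymptotic to an exact cone, with controlled error, so that the curvature of the interpolating region is estimable) is the technical heart. I would isolate this as a separate lemma: \emph{near an isolated conical singularity with $\Sc\ge 0$, the metric is uniformly close to the metric cone over a cross section of scalar curvature $\ge (n-1)(n-2)$, and can be deformed to a smooth metric on a ball with $\Sc \ge -\delta$}, after which the global argument is essentially bookkeeping with the cited enlargeability theorems.
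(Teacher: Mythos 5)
Your Stage 1 is where the argument breaks down, and the gap is not a technicality. You propose to cap the truncated cone $[\varepsilon/2,\varepsilon]\times\Sigma_i$ by a ``disk-like filling'' with scalar curvature $\ge -\delta$. Topologically this is already problematic: the cross section $\Sigma_i$ is an arbitrary closed manifold, the cone over it is not a manifold, and there is no canonical filling; any choice of filling changes the topology near $p_i$ in a way you would then have to track through the enlargeability argument. Geometrically, even when $\Sigma_i$ is a sphere, what you are asking for is precisely Gromov's fill-in problem with \emph{both} the boundary metric and the mean curvature prescribed by the outside piece, and Shi--Wang--Wei--Zhu show that nonnegative-scalar-curvature fill-ins are obstructed when the mean curvature is too large --- the cone's cross sections have mean curvature $(n-1)/r\to+\infty$, which is exactly the regime the paper flags as the delicate one. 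Two further quantitative points defeat the ``$\Sc\ge-\delta$'' claim. First, the limit argument only gives $\Sc_{g^N}\ge(n-1)(n-2)$, not the strict inequality you use, and the perturbation $h$ (with $|\overline\nabla^k h|=O(r^{\alpha-k})$) contributes $O(r^{\alpha-2})$ to the scalar curvature; interpolating to the exact cone at a fixed scale $\varepsilon$ therefore costs an error of order $\varepsilon^{\alpha-2}$, which does not tend to zero (it grows as $\varepsilon\to0$), so $\delta$ cannot be made small. Second, even if it could, $\Sc\ge-\delta$ is useless here: the Geroch-type obstruction is not stable under small negative scalar curvature without additional geometric control (the flat torus itself satisfies $\Sc\ge-\delta$ for every $\delta>0$), and neither Chodosh--Li nor Wang--Zhang applies to such metrics.

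The paper avoids all of this by going in the opposite direction: instead of filling in the singularity, it conformally \emph{blows it up} into a complete end. One either transplants the problem to an asymptotically flat manifold (via a Gromov--Lawson connected sum with an AF scalar-positive model) and uses a harmonic function $u\sim r^{2-n}$, or works directly on the compact conical manifold and solves $(-\Delta_g+\tfrac{n-2}{4(n-1)}\Sc_g)u=0$ with $u\sim a(x)r^{\nu_1^-}$, $\nu_1^-\le 2-n$; in either case $u^{4/(n-2)}g$ is a \emph{complete} metric on the regular part with $\Sc\ge0$ and positive somewhere, to which Chodosh--Li / Wang--Zhang apply verbatim. Your rigidity stage is likewise underpowered: ``the obstruction in its rigidity form'' does not directly give flatness of an incomplete singular metric; the paper first upgrades scalar-flat to Ricci-flat by a Kazdan-type deformation $g-t\eta\Ric_g$ combined with the first-eigenvalue theory of the conformal Laplacian on conical manifolds, and then invokes Mondino--Wei's fundamental-group rigidity for ${\rm RCD}(0,n)$ spaces to conclude that the space is a flat torus, which is also what forces the cross sections to be round spheres and yields the smooth extension. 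I would recommend abandoning the cap-and-fill strategy and rebuilding your proof around the conformal blow-up.
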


Here positive scalar curvature means that the scalar curvature is nonnegative and strictly positive somewhere.

\begin{rmrk}\label{rmrk-thm-1}
{\rm
In Theorem \ref{thm-main-1}, when $M$ is spin, the non-existence conclusion still holds if we replace $T^n$ by any $\Lambda^2$-enlargeable closed manifold of the same dimension, by the work of Wang-Zhang in \cite{Wang-Zhang-22}.
}
\end{rmrk}

An isolated conical singularity is a point that has a neighborhood diffeomorphic to $(0, 1) \times N$, for some closed manifold $N$, such that the restriction of the metric on this neighborhood is asymptotic to the model cone metric:
\begin{equation}%\label{eqn-model-cone-metric}
\g := dr^2 + r^2 g^N,
\end{equation}
where $r$ is a coordinate on $(0, 1)$ and $g^N$ is a smooth Riemannian metric on $N$, for a more precise definition, see Definition \ref{defn-conic-mfld} below. Note that $r=0$ corresponds to the conical singularity. In Theorem \ref{thm-main-1}, the conically singular points are smooth manifold points, i.e. the cross sections are diffeomorphic to the sphere, but the metrics on the cross sections may not be the standard one. The proof of Theorem \ref{thm-main-1} works in more general case. Namely, both the non-existence and rigidity conclusions still hold in the case when the cross section $(N, g^N)$ is allowed to be arbitrary closed Riemannian manifold, and which we state the result in Theorem \ref{thm-mfld-with-bdry} below. Note that even though we do not assume that the singular points are manifold points initially, in the rigidity part, %the zero scalar curvature will force them 
they are forced to be manifold points, and in particular the cross sections are standard spheres.

As a direct consequence of Theorem \ref{thm-main-1} and Remark \ref{rmrk-thm-1}, we have the following corollary:
\begin{cor}\label{cor-torus}
There is no Riemannian metric on a $\Lambda^2$-enlargeable  closed manifold that has finitely many isolated conical singularities and positive scalar curvature.
\end{cor}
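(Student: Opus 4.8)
The plan is to deduce the corollary directly from Theorem~\ref{thm-main-1} and its extension Remark~\ref{rmrk-thm-1}, using the elementary fact that connected sum with a standard sphere does not change the diffeomorphism type. Let $W$ be a closed $\Lambda^{2}$-enlargeable manifold of dimension $n\ge 3$; the hypothesis $n\ge 3$ is inherited from Theorem~\ref{thm-main-1} and is genuinely needed, since already in dimension two a torus admits non-flat metrics with a conical point of cone angle larger than $2\pi$ whose scalar curvature, by the Gauss--Bonnet formula, is nonnegative and positive somewhere on the regular part. Suppose, for contradiction, that $g$ is a Riemannian metric on $W$ with finitely many isolated conical singularities and positive scalar curvature on its regular part, in the sense (nonnegative, strictly positive somewhere) fixed right after Theorem~\ref{thm-main-1}.

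Now set $M=S^{n}$, the standard $n$-sphere. Then $M$ is a closed spin manifold of dimension $n$, and there is a diffeomorphism $S^{n}\#W\cong W$. Transporting $g$ through this diffeomorphism yields a Riemannian metric on $S^{n}\#W$ with finitely many isolated conical singularities and positive scalar curvature on the regular part --- the surgery forming the connected sum takes place in the smooth interior, so the cone points, their number, and their local model are untouched. But Remark~\ref{rmrk-thm-1} asserts precisely that, since $M=S^{n}$ is spin and $W$ is $\Lambda^{2}$-enlargeable (now playing the role of $T^{n}$ in Theorem~\ref{thm-main-1}), the manifold $S^{n}\#W$ carries no such metric. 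This contradiction proves the corollary; only the non-existence part of Theorem~\ref{thm-main-1}, not the rigidity part, enters.

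There is no genuine obstacle, since all the analytic content is already packaged in Theorem~\ref{thm-main-1} and Remark~\ref{rmrk-thm-1}; two small points are worth flagging. First, one must invoke Remark~\ref{rmrk-thm-1} and not merely Theorem~\ref{thm-main-1}: applying the theorem with $M=S^{n}$ recovers only $S^{n}\#T^{n}\cong T^{n}$, whereas a general $\Lambda^{2}$-enlargeable $W$ need not be diffeomorphic to any connected sum of the shape $(\,\cdot\,)\#T^{n}$, so one really needs the remark's freedom to replace $T^{n}$ by an arbitrary enlargeable factor while keeping the complementary summand, here $S^{n}$, spin. Second, the choice $M=S^{n}$ is legitimate because $S^{n}$ is spin in every dimension, being stably parallelizable.
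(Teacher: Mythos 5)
Your argument is correct and is exactly the deduction the paper intends: the corollary is stated there as a direct consequence of Theorem~\ref{thm-main-1} together with Remark~\ref{rmrk-thm-1}, realized by taking $M=S^{n}$ (spin) and using $S^{n}\#W\cong W$. Your added caveats about needing the remark rather than the bare theorem, and about the dimension restriction $n\ge 3$, are accurate but do not change the substance.
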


In \cite{CAW-TRAN-85}, Chou proved Corollary \ref{cor-torus} in even dimensions.
In \cite{Shi-Tam-PJM-2018}, Shi-Tam proved Corollary \ref{cor-torus} on tori for metrics with a special class of conical singularity, i.e, the cross section of the model cone in (\ref{eqn-model-cone-metric}) is a scaled round sphere.

The basic idea of proving the non-existence result in Theorem \ref{thm-main-1} is conformally blow-up the singular points to obtain a complete metric with positive scalar curvature. This would contradict the results in \cite{Chodosh-Li-Annals} and \cite{Wang-Zhang-22}. We provide two proofs here. %options of conformal factors that can be used to realize the idea. 
In the first proof, we first transplant the problem to an asymptotic flat manifold by doing a connect sum with a well-chosen asymptotic flat manifold. Then the conformal factor is given by a Green's type function of the Laplace operator, which is solved on the asymptotically flat manifold with a conical singularity. This proof is given in Section \ref{sect-case-1}. In the second proof, we work with the original manifold and choose the conformal factor given by a Green's type function of the conformal Laplacian on the compact manifold with a conical singularity. The second approach is similar to the one used for uniformly Euclidean singular metrics in \cite{Li-Mantoulidis, Cecchini2024, WX2024}. However, our conical singularity also allows for a topological singularity, which would not be uniformly Euclidean. The analysis is provided in Section \ref{sect-analysis}, and the alternative proof of the non-existence result in Theorem \ref{thm-main-1} is given in Section \ref{sect-non-existence}.

To prove the rigidity result in Theorem \ref{thm-main-1}, we first note that a nonnegative scalar curved Riemannian metric with a conical singularity must be scalar flat, by the non-existence result in Theorem \ref{thm-main-1}. Then, one shows that the conically singular metric must be Ricci flat by contradiction. Otherwise, we could locally deform the metric along the Ricci curvature direction and then do a conformal change for the deformed metric by using a first eigenfunction of the conformal Laplacian of the deformed metric. This would yield a metric with positive scalar curvature and a conical singularity, which contradicts the nonexistence result in Theorem \ref{thm-main-1}. This idea follows from a similar argument for smooth compact manifolds in \cite{Kazdan-85}. In our singular setting, we need to employ the spectral property of the conformal Laplacian on compact manifolds with a isolated conical singularity, as established in \cite{DW-JGA-2018}. (On compact smooth manifolds, one can also globally deform metric using Ricci flow to increase scalar curvature; however, the study of Ricci flow on conically singular manifolds becomes much more subtle.) Once the metric is shown to be Ricci flat away from the conical singularity, we can apply the fundamental group rigidity result for RCD space proved by Mondino-Wei in \cite{Mondino-Wei-19} to complete the proof. This part of the argument will be done in Section \ref{sect-rigidity}.

Kazdan-Warner \cite{KW-JDG-75} and Schoen \cite{Schoen-89} proved that on a closed manifold, there exists a smooth metric with positive scalar curvature iff its Yamabe constant (aka $\sigma$-constant or Schoen constant) is positive. Moreover, their rigidity result says that on a closed manifold with a nonpositive Yamabe constant, any metric with nonnegative scalar curvature must be Ricci flat. Motivated by the study of weak notions of positive scalar curvature, Li-Mantoulidis raised the question (Question 1.2 in \cite{Li-Mantoulidis}) of extending the previous rigidity result to the singular setting. Namely, for a closed manifold with a nonpositive Yamabe constant, they asked what conditions on a continuous, uniformly Euclidean singular metric with nonnegative scalar curvature can ensure that the metric can be extended smoothly to the whole manifold and is Ricci flat. Here uniformly Euclidean metrics are those that are quasi-isometric to smooth metrics. In \cite{Li-Mantoulidis}, Li-Mantoulidis answered affirmatively this question for codimension $2$ conically singular metrics (aka edge metric) with cone angles $\leq 2\pi$. They also constructed a codimension $2$ edge metric with cone angles  $>2\pi$ as a counterexample. Therefore, for codimension $2$ singular metrics, additional geometric assumptions are needed to ensure an affirmative answer to the question. In contrast, in the case of higher codimensions, Schoen conjectured that no extra regularity assumption is needed:
\begin{conj}[ Conjecture 1.5 in \cite{Li-Mantoulidis}]\label{conj-Schoen}
{\rm
Suppose $M$ is a closed smooth manifold whose Yamabe constant is nonpositive, and $g$ is a uniformly Euclidean metric on $M$ that is smooth away from a closed, embedded submanifold $S \subset M$ with codimension $(S \subset M) \geq 3$. Then
\begin{eqnarray*}
& & \Sc_g \geq 0 \ \  \text{on} \ \ M \setminus S \ \ \text{and} \ \ \sigma(M) \leq 0  \\
& \Longrightarrow & g \ \ \text{extends smoothly to } \ \ M \ \ \text{and} \ \ \Ric_g \equiv 0.
\end{eqnarray*}
}
\end{conj}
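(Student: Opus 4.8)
\textbf{A proof proposal for Conjecture \ref{conj-Schoen}.}
The strategy I would pursue runs the conformal blow‑up philosophy of this paper together with the classical Kazdan--Warner--Schoen rigidity, the new ingredient being a capacity argument that renders a codimension $\geq 3$ singular set invisible to the relevant Sobolev and elliptic theory. The starting observation is that if $g$ is uniformly Euclidean and $S\subset M$ has $\operatorname{codim}(S)\geq 3$, then the $2$‑capacity of $S$ with respect to $g$ (equivalently with respect to a smooth background $g_0$) vanishes. Hence $C^\infty_c(M\setminus S)$ is dense in $W^{1,2}(M,g)$, one has $W^{1,2}(M\setminus S,g)=W^{1,2}(M,g)=W^{1,2}(M,g_0)$, and a distributional inequality $\Sc_g\geq 0$ on $M\setminus S$ upgrades to the same inequality tested against all nonnegative $\varphi\in W^{1,2}(M,g)$. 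In particular the conformal Laplacian $L_g=-\tfrac{4(n-1)}{n-2}\Delta_g+\Sc_g$ is a well‑defined nonnegative self‑adjoint operator on $L^2(M,g)$ with form domain $W^{1,2}(M,g)$, and the singular Yamabe functional of $[g]$ on $M\setminus S$ has the same behaviour as a smoothed model in the quasi‑isometric background conformal class; comparing the two using $\operatorname{cap}_2(S)=0$ is how the hypothesis $\sigma(M)\leq 0$ enters.

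First I would establish a \emph{non‑existence lemma}, the analogue for uniformly Euclidean metrics of the non‑existence half of Theorem \ref{thm-main-1}: there is no uniformly Euclidean metric on $M$ with $\sigma(M)\leq 0$, smooth away from $S$ with $\operatorname{codim}(S)\geq 3$, whose scalar curvature is $\geq 0$ on $M\setminus S$ and positive somewhere. This is proved by adapting the conformal‑Laplacian argument of Section \ref{sect-analysis} and Section \ref{sect-non-existence}: using a first eigenfunction (or a subcritical approximation) one produces a positive $u\in W^{1,2}(M,g)$ with $L_g u\geq 0$ weakly on $M$; away from $S$ the De Giorgi--Nash--Moser theory applies because u.e.\ means $g$ is uniformly comparable to the flat metric in local charts, so $u$ is locally H\"older, locally bounded, and bounded below by a positive constant by the Harnack inequality; thus $\tilde g=u^{4/(n-2)}g$ is again uniformly Euclidean, smooth on $M\setminus S$, singular only along $S$, with $\Sc_{\tilde g}\geq 0$ and still positive somewhere. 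One then concludes, via the Yamabe functional and the capacity comparison with the smooth background, that $\sigma(M)>0$, contradicting the hypothesis. (Equivalently, one may blow up the conformal factor along $S$ to obtain a complete metric on $M\setminus S$ and invoke a nonexistence result for complete positive scalar curvature metrics on the complement of a codimension $\geq 3$ submanifold; for the conjecture one needs the compact scalar‑flat representative anyway.)

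Next comes the rigidity step, transplanting Kazdan--Warner's argument as in Section \ref{sect-rigidity}. By the non‑existence lemma, any u.e.\ metric $g$ as in the conjecture has $\Sc_g\equiv 0$ on $M\setminus S$. Suppose $\Ric_g$ does not vanish identically there; pick $p\in M\setminus S$ and a small ball $B\ni p$ on which $\Ric_g\neq 0$, and deform $g$ compactly supported in $B$ in the direction of $-\Ric_g$ so as to strictly increase scalar curvature on $B$ while keeping it $\geq 0$ and leaving the metric untouched near $S$. Then take a first eigenfunction of the conformal Laplacian of the deformed metric --- here one invokes the spectral theory of the conformal Laplacian on the uniformly Euclidean singular space, exactly as the conical spectral theory of \cite{DW-JGA-2018} is used in Section \ref{sect-rigidity} --- and conformally change to get a u.e.\ metric, smooth on $M\setminus S$ and singular only along $S$, with $\Sc\geq 0$ and $\Sc>0$ somewhere, contradicting the non‑existence lemma. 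Hence $\Ric_g\equiv 0$ on $M\setminus S$. (One must also check the deformation can always be placed on the open dense regular part and does not disturb the weak scalar curvature inequality near $S$; this is immediate once $\Ric_g\not\equiv 0$ somewhere on $M\setminus S$.)

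Finally I would remove the singularity: $g$ is now uniformly Euclidean, smooth and Ricci flat on $M\setminus S$, with $S$ a closed embedded submanifold of codimension $\geq 3$. In $g$‑harmonic coordinates the Ricci‑flat equation is an elliptic system for the metric coefficients, so DeTurck--Kazdan regularity makes $g$ real‑analytic wherever harmonic charts exist, and an $\varepsilon$‑regularity / removable‑singularity argument for Einstein metrics of Anderson--Bando--Kasue--Nakajima type --- using that the $g$‑volume of tubular neighbourhoods of $S$ and the $W^{1,2}$ energy across $S$ are negligible because $\operatorname{cap}_2(S)=0$ --- should extend the harmonic charts across $S$ and show $g$ extends smoothly to a Ricci‑flat metric on all of $M$. \emph{This last step is the main obstacle in the stated generality.} For a conical singularity one has a precise local model and indicial‑root asymptotics (which is exactly what makes Theorem \ref{thm-main-1} tractable), whereas for a merely uniformly Euclidean metric, singular along a general codimension $\geq 3$ set, one has no a priori control beyond quasi‑isometry; so the removable‑singularity theorem for $C^0$‑Einstein metrics has to be proved essentially from scratch, the crux being to deduce from $\operatorname{cap}_2(S)=0$ plus the Einstein condition enough a priori curvature and injectivity‑radius control near $S$ to trigger $\varepsilon$‑regularity. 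Establishing that (or finding a substitute that bypasses it) is where I expect the real difficulty of the conjecture to lie.
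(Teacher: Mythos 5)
The statement you are addressing is recorded in the paper as a \emph{conjecture} (Schoen's conjecture, Conjecture 1.5 of Li--Mantoulidis); the paper does not prove it, and in fact explicitly notes that Cecchini--Frenck--Zeidler have constructed counterexamples to it on simply connected closed manifolds of dimension $\geq 8$. So the conjecture is false in the stated generality, and any purported proof must break down somewhere. The paper only confirms it in the special case of isolated conical singularities on $M\#T^n$ (Theorem \ref{thm-main-1}), where the torus factor supplies essential topological input.

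The concrete gap in your argument is the step in your ``non-existence lemma'' where you conclude, ``via the Yamabe functional and the capacity comparison with the smooth background, that $\sigma(M)>0$.'' What $\operatorname{cap}_2(S)=0$ gives you is that $C^\infty_c(M\setminus S)$ is dense in $W^{1,2}(M,g)$, hence that the first eigenvalue of $L_g$ --- equivalently, the Yamabe-type constant of the \emph{singular} conformal class $[g]$ on $M\setminus S$ --- is positive. But $\sigma(M)$ is a supremum of Yamabe constants over \emph{smooth} conformal classes, and quasi-isometry preserves Sobolev norms, not scalar curvature: there is no inequality transferring positivity of the singular Yamabe quotient of $[g]$ to any smooth conformal class unless one can actually desingularize $g$ into a smooth metric of positive scalar curvature, which is precisely the content that fails in dimension $\geq 8$. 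Your parenthetical alternative (blow up along $S$ and invoke non-existence of complete PSC metrics on $M\setminus S$) has the same problem: such non-existence theorems are known only under topological hypotheses (enlargeability, spin with nonvanishing Rosenberg index, $M\#T^n$ with $n\leq 7$, aspherical in low dimensions), never under $\sigma(M)\leq 0$ alone, and the counterexamples show they must fail in general. This is why the paper restricts to $M\#T^n$, uses the precise indicial asymptotics of the conical model to produce a complete asymptotically conical conformal blow-up, and then quotes Chodosh--Li and Wang--Zhang. Your subsequent steps (the Kazdan--Warner deformation and the removable-singularity argument) inherit this gap; and, as you yourself observe, the removable-singularity step for merely uniformly Euclidean metrics is a second genuine obstruction for which no proof is available even in the cases where the non-existence part is known.
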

Li-Mantoulidis \cite{Li-Mantoulidis} confirmed Conjecture \ref{conj-Schoen} in dimension three. Kazaras \cite{Kazaras-MAAN-19} proved that, on a four-dimensional enlargeable closed smooth manifold, a uniformly Euclidean metric $g$  that is smooth away from a codimension three submanifold and has positive scalar curvature on the regular part must be Ricci flat on the regular part. Cecchini-Frenck-Zeidler \cite{Cecchini2024} proved that a uniformly Euclidean metric on a closed spin manifold with the fundamental group satisfying some condition on Rosenberg index must be Ricci flat on the regular part, provided that the metric is smooth away from a finite set and has positive scalar curvature on the regular part. In particular, their work applies to manifolds of form $M \# T^n$, where $M$ is an arbitrary spin manifold of dimension $n$. Wang-Xie \cite{WX2024} proved that on a closed spin enlargeable $n$-dimensional smooth manifold, a uniformly Euclidean metric $g$ that is smooth away from an embedded finite simplicial complex with codimension$\geq \frac{n}{2} +1$ and has positive scalar curvature on the regular part must be Ricci flat on the regular part, provided the fundamental group of the singular set maps to zero in the fundamental group of the whole manifold.

In contrast to the positive results mentioned above, Cecchini-Frenck-Zeidler \cite{Cecchini2024} also constructed some counterexamples to Conjecture \ref{conj-Schoen} on simply connected closed smooth manifold of dimension$\geq 8$. However, if the manifold has certain topology, then Conjecture \ref{conj-Schoen} could still be true. Indeed, Theorem \ref{thm-main-1} confirms Conjecture \ref{conj-Schoen} in the case of metrics with isolated conical singularities on $M \# T^n$, where $M$ is a closed spin manifold of arbitrary dimension $n$, or a closed manifold of dimension $n \leq 7$. Note that in the case of isolated  conical singularities, by carefully using the geometry and analysis on a cone, we are able to obtain the removable singularity result; that is, the metric can be smoothly extended across the singular points. In the works about general uniformly Euclidean singular metrics mentioned above, such smooth extension results have not yet been obtained yet. It is an interesting question to be investigated.

For a conically singular metric on a smooth manifold, one can also use the work in \cite{LSW63} and \cite{SY-MM-79} to solve a Green's type function to conformally blow-up the singular metric, similar to the approach used for general uniformly Euclidean singular metric in \cite{Li-Mantoulidis, Kazaras-MAAN-19, Cecchini2024, WX2024}. However, this method cannot deal with a conical singularity that is a topological singularity, that is, the cross section is not diffeomorphic to a sphere. This can be viewed as conically pinching the boundary to a point for a manifold with boundary (if connected, otherwise, multiple points) as discussed in the following. Therefore, we apply analysis to asymptotically flat and closed manifolds with isolated conical singularities to solve for the desired conformal factor.

For a compact manifold $M$ with a non-empty boundary $\partial M$, on pp. 31-32 in \cite{Gromov-four-lect-v3}, Gromov raised a Fill-in problem, asking whether any given metric on the boundary $\partial M$ can be extended to a metric on $M$ with positive scalar curvature. In \cite{Shi-Wang-Wei-Crelle-22}, Shi-Wang-Wei answered affirmatively this question of Gromov. On the other hand, under the assumption that the boundary is a sphere $\mathbb{S}^{n-1}$ ($3 \leq n \leq 7$) with a metric satisfying some geometric bound, Shi-Wang-Wei-Zhu \cite{SWWZ-MAAN-21} proved that if, in addition, the mean curvature of the boundary is prescribed and too large, then there is no metric extension with nonnegative scalar curvature. Interestingly, note that if we remove the singular points of the conically singular Riemannian manifold considered in Theorem \ref{thm-main-1}, then the remaining regular part is diffeomorphic to a manifold with boundary whose components are spheres. Also note that the mean curvature of the cross sections of a cone tends to positive infinity at  the cone tip. Different from the setting that Shi-Wang-Wei-Zhu worked on, we pinch geometrically the boundary to a point (see Figure 1). Actually, the proof of Theorem \ref{thm-main-1} works for a more general setting where the cross section could be any closed manifold. So for manifolds with boundary, we have the following result.
\begin{thm}\label{thm-mfld-with-bdry}
Let $M$ be an $n$-dimensional compact manifold with boundary, $n\geq 3$. Assume either that the dimension $n \leq 7$, or that $M$ is a spin manifold. Then there is no Riemannian metric on $M \# T^n$ with a conical singularity and positive scalar curvature away from the singularity and pinching each component of the boundary into a conically singular point. Moreover, such a conically singular metric with nonnegative scalar curvature must be flat, and in particular, each component of the boundary must be diffeomorphic to a sphere. 
\end{thm}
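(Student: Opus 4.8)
The plan is to reduce Theorem~\ref{thm-mfld-with-bdry} to Theorem~\ref{thm-main-1} by a simple topological identification, so that essentially no new analysis is required. The key observation, already hinted at in the introduction, is that pinching each boundary component of $M$ to a point produces a closed topological space which, away from the pinch points, is diffeomorphic to the interior of $M$, and the resulting metric is exactly a metric with finitely many isolated conical singularities in the sense of Definition~\ref{defn-conic-mfld}, whose cross sections $(N_i, g^{N_i})$ are the boundary components $(\partial_i M, g|_{\partial_i M})$. So I would first make this precise: let $\widehat M$ denote the space obtained from $M$ by collapsing each component of $\partial M$ to a point; then $\widehat M \setminus \{p_1, \dots, p_k\}$ is diffeomorphic to $M^\circ$ (the interior of $M$), and a metric on $M \# T^n$ of the type described in the statement — smooth on the regular part, asymptotic near each $p_i$ to a model cone $dr^2 + r^2 g^{N_i}$ — is literally a conically singular metric on the closed singular space $\widehat{M \# T^n}$ in the sense used in Theorem~\ref{thm-main-1}. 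Note the connected sum with $T^n$ is performed in the interior, away from the boundary, so it is unaffected by the pinching.

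Second, I would invoke Theorem~\ref{thm-main-1} directly. Since that theorem is stated (per the remark following it, and the discussion of Theorem~\ref{thm-mfld-with-bdry} in the introduction) without the a~priori assumption that the conically singular points are manifold points — i.e.\ it allows arbitrary closed cross sections $(N_i, g^{N_i})$ — the hypotheses match: $\widehat M \# T^n$ with the pinched metric has finitely many isolated conical singularities and nonnegative scalar curvature on the regular part. The dimension/spin hypothesis transfers verbatim: if $n \le 7$ or $M$ is spin, then $\widehat M \# T^n$ (which is $M^\circ$ compactified, carrying the same local topology) satisfies the corresponding hypothesis. Hence Theorem~\ref{thm-main-1} yields: there is no such metric with scalar curvature positive somewhere, and any such metric with $\Sc \ge 0$ is flat everywhere and extends smoothly across the $p_i$.

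Third, I would translate the rigidity conclusion back to the manifold-with-boundary picture. ``Extends smoothly across $p_i$'' means that near each $p_i$ the metric completes to a smooth Riemannian metric on a neighborhood of a manifold point; combined with the local structure $M^\circ \cup \{p_i\}$, this forces the cross section $N_i$ together with its link structure to be that of a smooth point, i.e.\ $N_i$ is diffeomorphic to $S^{n-1}$ and the metric on the cone is smooth at the tip. Equivalently, each boundary component $\partial_i M$ is diffeomorphic to $S^{n-1}$, and $M \# T^n$ with the pinch points filled in is a closed flat manifold. This is exactly the asserted ``each component of the boundary must be diffeomorphic to a sphere'' together with flatness.

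The main obstacle — really the only content beyond bookkeeping — is verifying carefully that the pinched space $\widehat{M\#T^n}$ with the given metric genuinely falls under the precise definition of a Riemannian manifold with isolated conical singularities used in Theorem~\ref{thm-main-1} (Definition~\ref{defn-conic-mfld}): one must check that the asymptotic-to-the-model-cone condition near each $p_i$ is equivalent to, or implied by, whatever decay/regularity is being assumed on the collar of $\partial M$ in the manifold-with-boundary formulation, and that $\widehat M$ is indeed a reasonable space (e.g.\ that the smooth structure on $M^\circ$ is the one implicitly used). Once that identification is pinned down, Theorem~\ref{thm-mfld-with-bdry} is an immediate corollary of Theorem~\ref{thm-main-1}, with no additional analytic work; the proof is therefore short.
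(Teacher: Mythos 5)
Your proposal is correct and is essentially the paper's own argument: the paper gives no separate proof of Theorem~\ref{thm-mfld-with-bdry}, presenting it as the form of Theorem~\ref{thm-main-1} for arbitrary closed cross sections, obtained precisely by identifying the boundary-pinched space with a closed space carrying finitely many isolated conical singularities in the sense of Definition~\ref{defn-conic-mfld}. The only nuance is that you should invoke the \emph{proof} of Theorem~\ref{thm-main-1} (which, as the paper notes in the paragraph preceding Theorem~\ref{thm-mfld-with-bdry}, nowhere uses that the cross sections are spheres) rather than its literal statement, which does assume the singular points are manifold points.
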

%%%%%%%%%%%%%%%%%%%%%%%%%%%%%%%%%%%%%
\begin{center}
\begin{tikzpicture}
\draw (-2.3,-.25) arc (-150:150:1 and 0.5);
%\draw  (0,0) ellipse (1.5 and 0.5);
\draw  (-2.0,0) arc (180:360: 0.5 and .2);
\draw  (-1.2,-.15) arc (0:180: 0.3 and .15);
\draw  (-2.646,0.15) arc (270:330: 0.4 and .2);
\draw  (-2.3,-0.25) arc (30:90: 0.4 and .2);
\draw  (-2.646,0.15) arc (90:120: 0.4 and .2);
\draw  (-2.846,-0.123) arc (240:270: 0.4 and .2);
\draw (-3.23,0)--(-2.83,0.128);
\draw (-3.23,0)--(-2.83,-0.128);
\draw (-2.9,0.1) arc (90:270: 0.04 and 0.1);
\draw [densely dashed] (-2.9,-0.1) arc (-90:90: 0.04 and 0.1);
%%%%%%%%%%%%%%%%%%%%%%%%%%%%%%%%%%%%%
\draw (-8.8,-.25) arc (-150:150:1 and 0.5);
%\draw  (0,0) ellipse (1.5 and 0.5);
\draw  (-8.5,0) arc (180:360: 0.5 and .2);
%\draw  (-0.7,-.15) arc (0:180: 0.3 and .15);
\draw  (-7.7,-0.15) arc (0:180: 0.3 and .15);
\draw  (-9.146,0.15) arc (270:330: 0.4 and .2);
\draw  (-8.8,-0.25) arc (30:90: 0.4 and .2);
\draw (-9.6,0.15)--(-9.15,0.15);
\draw (-9.6,-0.15)--(-9.15,-0.15);
\draw (-9.6,0.15) arc (90:270: 0.08 and 0.15);
\draw [densely dashed] (-9.6,-0.15) arc (-90:90: 0.08 and 0.15);

%%%%%%%%%%%%%%%%%%%%
\draw [->] (-6.3,0)--(-3.8,0);
%\node[above] at (-5,0) {Pinching boundary};
%\node[below] at (-5,0) {to a point};
\node at (-5,-1){Figure 1. Conically pinching the boundary};

\end{tikzpicture}
\end{center}
%%%%%%%%%%%%%%%%%
%%%%%%%%%%%%%

An important application of the resolution of the Geroch conjecture and its generalizations is the positive mass theorem. For smooth asymptotically flat manifolds, a well-known Schoen-Yau-Lohkamp compactification argument reduces the positive mass theorem to the Geroch conjecture for $M \# T^n$, where $M$ is a compact smooth manifold. Kazaras \cite{Kazaras-MAAN-19} and Cecchini-Frenck-Zeidler \cite{Cecchini2024} obtained the positive mass theorem on certain uniformly Euclidean asymptotically flat manifolds by compactification argument. For an asymptotically flat manifold with isolated conical singularities studied in \cite{DSW-spin-23, DSW-PMT-nonspin}, by using the analysis near conical singularity in \cite{DSW-PMT-nonspin} and the analysis on asymptotically flat ends in \cite{SY-PMT}, one can check that Schoen-Yau-Lohkamp compactification argument in \cite{Lohkamp1999} still works, and the positive mass theorem for asymptotically flat manifolds with isolated conical singularities follows from the Geroch conjecture with isolated conical singularities for $M \# T^n$ as established in Theorems \ref{thm-main-1} and \ref{thm-mfld-with-bdry}. %We provide some details of the derivation for the Schoen-Yau-Lohkamp compactification with isolated conical singularities in Appendix \ref{sect-compactification}.

%\myworries{XD: adding things suggested by Changliang}
 
The paper is organized as follows: In section \ref{sect-case-1}, we introduce the basic definition and prove the non-existence result in Theorem \ref{thm-main-1}. In section \ref{sect-analysis}, we prepare some analysis results for section \ref{sect-non-existence}. In section \ref{sect-non-existence}, we provide another way to prove the non-existence result in Theorem \ref{thm-main-1}. In section \ref{sect-rigidity}, we deal with the rigid case. In Appendix \ref{sect-compactification}, we provide some details of the derivation for the Schoen-Yau-Lohkamp compactification with isolated conical singularities.

{\em Acknowledgement}:
 Xianzhe Dai is partially supported by the Simons Foundation. Yukai Sun is partially funded by the National Key R\&D Program of China Grant 2020YFA0712800. Changliang Wang is partially supported by the Fundamental Research Funds for the Central Universities and Shanghai Pilot Program for Basic Research.

%%%%%%%%%%%%%%%%%%%%%%%%%%%%%%%%%%%%%%%%%%%%%

%%%%%%%%%%%%%%%%%%%%%%%%%%%%%%%%%%%%%%%%%%%%%%%%%

\section{Proof of non-existence result in Theorem \ref{thm-main-1}}\label{sect-case-1}

%\subsection{Asymptotically flat manifolds with conical singularity}

We first give the definition of manifolds with finitely many isolated conical singularities; for simplicity, we give the defintion for 
a single conical singularity.
\begin{defn}\label{defn-conic-mfld}
{\rm
We say $(M^n, g, d, o)$ is a compact Riemannian manifold with a single conical singularity at $o$, if
\begin{enumerate}[(\romannumeral1)]
\item $d$ is a metric on $M$ and $(M, d)$ is a compact metric space, \myworries{XD: I am worried about what does it mean by metric space with smooth boundary}
\item $g$ is a smooth Riemannian metric on the regular part $ M \setminus \{ o \}$,
         $d$ is the induced metric by the Riemannian metric $g$ on $M \setminus \{ o \}$ ,
\item there exists a neighborhood $U_o$ of $o$ in $M \setminus \partial M$, such that
         $U_o \setminus \{ o \} \simeq (0, 1) \times N$
          for a smooth compact manifold $N$,
           and on $U_o \setminus \{ o \}$ the metric $g = \g + h $, where
         \begin{equation}\label{eqn-model-cone-metric}
         \g = dr^2 + r^2 g^N,
         \end{equation}
         $g^N$ is a smooth Riemannian metric on $N$, $r$ is a coordinate on $(0, 1)$, $r=0$ corresponding the singular point $o$,
         and $h$ satisfies
         \begin{equation}\label{eqn-cone-metric-asymptotic}
         |\on^k h|_{\g} = O(r^{ \alpha - k }),  \ \ \text{as} \ \ r \rightarrow 0,
         \end{equation}
         for some $\alpha >0$ and $k = 0, 1 $ and $2$, where $\on$ is the Levi-Civita connection of $\g$.
\end{enumerate}

}
\end{defn}
We will need the notion of asymptotically flat manifolds with finitely many isolated conical singularities, which we recall below.
\begin{defn}\label{defn-AF-conic-mfld}
{\rm
We say $(M^n, g, o)$ is an asymptotically flat manifold with a single isolated conical singularity at $o$, if $M^n = M_0 \cup M_\infty$ satisfies
\begin{enumerate}[(\romannumeral1)]
\item $( M_0, g|_{M_0 \setminus\{o\}}, o)$ is a compact Riemannian manifold with smooth boundary and a single conical singularity at $o$
         defined as in Definition \ref{defn-conic-mfld},
\item  $M_\infty$ is diffeomorphic to  $ \R^n \setminus B_{R}(0) $ for some $R > 0$, and under this diffeomorphism the smooth Riemannian metric $g$ on $M_\infty$ satisfies
         \[
          g = g_{\R^n} + O(\rho^{-\tau}), \ \ |(\nabla^{g_{\R^n}})^i g|_{g_{\R^n}} = O(\rho^{-\tau - i}),  \ \  \text{as} \ \ \rho \rightarrow +\infty,
         \]
         for $i = 1, 2$ and $3$,
         where $\tau > 0$ is the asymptotic order, $\nabla^{g_{\R^n}}$ is the Levi-Civita connection of the Euclidean metric $g_{\R^n}$, and $\rho$ is the Euclidean
         distance to a base point.
\end{enumerate}

%If in the item (i), $(M_0, g|_{M_0})$ is a compact smooth manifold with boundary, then $(M, g)$ is an asymptotically flat manifold.
}
\end{defn}
%%%%%%%%%%%%%%%%%%%%%%%%%%%%%%%%%%%%%%%%%%%
For our purpose, we need to produce an asymptotically flat manifold with a conical singularity from a compact manifold with a single conical singularity. We start with:
\begin{lem} \label{lem-AF-PSC}
The manifold:
\[\hat{M}^{n}\cong \mathbb{R}^{n}\# (\mathbb{R}P^3 \times \mathbb{S}^{n-3} )\]
admits a complete asymptotically flat Riemannian metric $g_{\hat{M}^n}$ of asymptotic order $n-2$ with positive scalar curvature.
\end{lem}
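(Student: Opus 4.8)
The plan is to construct the metric by starting from two standard PSC building blocks and gluing them with the Gromov--Lawson connected sum construction. The manifold $\mathbb{R}P^3 \times \mathbb{S}^{n-3}$ carries an obvious metric of positive scalar curvature: take $g_0 \oplus g_{\mathbb{S}^{n-3}}$, where $g_0$ is the round metric on $\mathbb{R}P^3$ (with curvature normalized so that, together with the unit round sphere factor, the product scalar curvature is a large positive constant). This is a \emph{closed} manifold with PSC. Separately, $\mathbb{R}^n$ with the Euclidean metric is an asymptotically flat manifold with zero (hence nonnegative) scalar curvature. The idea is to perform a connected sum of $\mathbb{R}^n$ (away from its end) with $\mathbb{R}P^3 \times \mathbb{S}^{n-3}$, while keeping PSC and without disturbing the asymptotically flat end of $\mathbb{R}^n$.

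The key step is the Gromov--Lawson surgery/connected sum lemma (\cite{GL-Annals-80}; see also the Schoen--Yau version): if $(Y_1^n, g_1)$ and $(Y_2^n, g_2)$ both have positive scalar curvature, then $Y_1 \# Y_2$ admits a metric of positive scalar curvature that agrees with $g_1$ and $g_2$ outside small balls around the two points being joined. A minor point is that here one of the summands, $\mathbb{R}^n$, is only scalar flat, not strictly PSC; but the connected sum construction only requires $\Sc \geq 0$ on the piece one leaves untouched, since the neck can be built with as much positive curvature as needed and one may take the gluing point in $\mathbb{R}^n$ far inside a compact region, leaving a neighborhood of infinity isometric to the original Euclidean end. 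Concretely, one removes a small ball around the origin in $\mathbb{R}^n$ and a small ball in $\mathbb{R}P^3\times\mathbb{S}^{n-3}$, and inserts a PSC neck; the resulting metric $g_{\hat M^n}$ is complete, has positive scalar curvature everywhere, and on the end is \emph{exactly} the Euclidean metric. In particular it is asymptotically flat, and since the metric is literally Euclidean outside a compact set, it is asymptotically flat of \emph{any} order, in particular order $n-2$ (one may also, if a genuinely nontrivial but controlled decay is preferred, bend the end into a Schwarzschild-type end of mass parameter giving order $n-2$, which still carries $\Sc\geq 0$ and $\Sc>0$ near the neck, preserving PSC).

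The main obstacle, such as it is, is purely bookkeeping: one must check that the Gromov--Lawson neck can be inserted so that strict positivity of the scalar curvature is retained globally (not merely away from the neck) and that the construction is compatible with leaving the asymptotically flat end untouched. This is standard — the neck metric in \cite{GL-Annals-80} is constructed to have scalar curvature bounded below by a large positive constant, dominating any fixed loss from the bending regions — so no new analysis is required. One should also note that $\mathbb{R}P^3\times\mathbb{S}^{n-3}$ is chosen precisely because it is a closed PSC manifold with nontrivial (indeed, with torsion) fundamental group and nontrivial topology, which will be used later; for the present lemma any closed PSC manifold of dimension $n$ would do, and $\mathbb{R}P^3\times\mathbb{S}^{n-3}$ visibly has such a metric.
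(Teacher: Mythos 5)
Your construction cannot work as stated, and the obstruction is not ``bookkeeping'': a complete metric on $\mathbb{R}^n \# (\mathbb{R}P^3\times\mathbb{S}^{n-3})$ that is exactly Euclidean outside a compact set has ADM mass zero, so if it also had $\Sc\geq 0$ it would be isometric to flat $\mathbb{R}^n$ by the rigidity part of the positive mass theorem (which applies here, the manifold being spin) --- impossible, since it is not diffeomorphic to $\mathbb{R}^n$. The error is precisely in your ``minor point'': the Gromov--Lawson connected sum does \emph{not} merely require $\Sc\geq 0$ on the piece one leaves untouched. Bending the metric near the chosen point of $\mathbb{R}^n$ into the neck produces, in the scalar curvature of the deformed hypersurface, a negative term of the form $-(n-1)k\sin\theta/r$ ($k$ the curvature of the bending curve, $\theta$ the bending angle), and at the start of the bend ($\theta$ near $0$) this can only be absorbed by strict positivity of the ambient scalar curvature; the compensating term $(n-1)(n-2)\sin^2\theta/r^2$ vanishes to higher order in $\theta$. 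With $\Sc\equiv 0$ the bend cannot get started while keeping $\Sc\geq 0$. The large positive curvature of the neck itself sits in a different region and does not dominate this loss. This is also why the lemma insists on asymptotic order exactly $n-2$: the metric must carry a strictly positive mass term $m\rho^{2-n}$ and cannot be Euclidean (or of faster decay) near infinity.

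The paper's route avoids this issue. It first performs the Gromov--Lawson surgery between two \emph{closed, strictly PSC} manifolds, $\mathbb{S}^n$ and $\mathbb{R}P^3\times\mathbb{S}^{n-3}$, where the surgery theorem genuinely applies; it then opens up the asymptotically flat end by the conformal blow-up $\hat g = \left((n-2)\omega_n G_P\right)^{4/(n-2)}g$ using the Green's function of the conformal Laplacian at a point $P\in\mathbb{S}^n$. This yields a scalar-flat but not Ricci-flat AF metric, of order $n-2$ because $g$ is conformally flat near $P$ (Lee--Parker), and a short time of Ricci flow then converts scalar-flat-but-not-Ricci-flat into strictly positive scalar curvature while preserving asymptotic flatness of order $n-2$. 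If you wish to salvage a more direct gluing argument, you must first make $\Sc>0$ near the gluing point on the $\mathbb{R}^n$ side, which necessarily pushes you out of the exactly-Euclidean class and into a positive-mass end; at that point you are essentially reproducing the conformal blow-up construction.
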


\begin{proof}
The proof follows that for similar conclusion in dimension three in Corollary 3.2 in \cite{HR2012}. We take $\mathbb{S}^n$ with the standard metric and $\mathbb{R}P^3 \times \mathbb{S}^{n-3}$ with the standard product metric. By doing surgery in \cite{Gromov-Lawson-80} and \cite{SY-MM-79}, we then obtain a metric $g$ on $\mathbb{S}^n \# (\mathbb{R}P^3 \times \mathbb{S}^{n-3})$ with positive scalar curvature. Let $P$ be a point in $\mathbb{S}^n \subset \mathbb{S}^n \# (\mathbb{R}P^3 \times \mathbb{S}^{n-3})$, and $G_P$ be the Green function of the conformal Laplacian $-\Delta_g + \frac{(n-2)}{4(n-1)}\Sc_g$ with a pole at $P$ on $\mathbb{S}^n \# (\mathbb{R}P^3 \times \mathbb{S}^{n-3})$. Define the metric $\hat{g} =: \left( (n-2) \omega_n G_P \right)^{\frac{4}{n-2}} g$ on $\left(\mathbb{S}^n \setminus \{P\} \right) \# (\mathbb{R}P^3 \times \mathbb{S}^{n-3})$. Note that $\left(\mathbb{S}^n \setminus \{P\} \right) \# (\mathbb{R}P^3 \times \mathbb{S}^{n-3})$ is diffeomorphic to $\mathbb{R}^n \# (\mathbb{R}P^3 \times \mathbb{S}^{n-3})$, and $\hat{g}$ is asymptotically flat and scalar flat, but not Ricci flat. The asymptotic order of $g$ is $n-2$, since $g$ is conformally flat in a neighborhood of $P$, see e.g. Lemma 6.4 in \cite{Lee-Parker}. Finally, by deforming the metric $\hat{g}$ along Ricci flow, we obtain an asmyptotically flat metric $g_{\hat{M}^n}$ of order $n-2$ with positive scalar curvature, see \cite{DM2007} and also \cite{Li-2018}.
\end{proof}

%%%%%%%%%%%%%%%%%%%%%%%%%%%%%%%%%%%%%%%%%
%\subsection{The proof of Theorem \ref{thm-main-2}}
Let $(M \# T^n, g, o), n\geq 3$, be an $n$-dimensional compact manifold with a single conical singularity at $o$. Assume that the scalar curvature of $g$ on the regular part is nonnegative, and is strictly positive at a point $x$. Then by Theorem 1.1 in \cite{Gromov-Lawson-80}, we make a connected sum of $(M \# T^n, g)$ (in a small neighborhood of $x$) and $(\hat{M}^n, g_{\hat{M}^3})$, which is given in \lemref{lem-AF-PSC}, to obtain an asymptotically flat manifold with a single conical singularity at $o$, denoted by $(\widetilde{M} \# T^n, \widetilde{g}, o)$, such that $\widetilde{g}$ has nonnegative scalar curvature on the regular part and strictly positive at some point. 

We recall an existence result for harmonic functions with certain asymptotic behavior on asymptotically flat manifolds with isolated conical singularity proved in \cite{DSW-PMT-nonspin}.
\begin{lem}[Lemma 4.1 in \cite{DSW-PMT-nonspin}]\label{lem-harmonic-function-ACyl}
There is a harmonic function $u \geq 1$ on $(\widetilde{M} \# T^n, \widetilde{g})$ satisfying the following asymptotic conditions near conical singularity and the infinity on the asymptotic flat end:
\begin{equation}\label{eqn-asymptotic-harmonic-function-ACyl}
      u=\begin{cases}
      r^{2-n} + o(r^{2-n+\alpha^\prime}),  & \text{as} \ \ r \to 0, \\
      
        1 + A\rho^{2-n} + o(\rho^{\beta'}), & \text{as} \ \ \rho \to +\infty,\\
      
      \end{cases}
\end{equation}
where $0 < \alpha^\prime < \alpha$ and $\beta' < 2-n$, $A$ is a positive constant. Here $\alpha$ is the asymptotic order of the metric $g$ near the conical singularity in Definition \ref{defn-conic-mfld}.
\end{lem}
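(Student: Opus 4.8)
The plan is to produce $u$ in the form $u = 1 + G$, where $G$ is a positive $\widetilde g$-harmonic function on the regular part of $\widetilde M \# T^n$ playing the role of a Green's function with ``pole'' at the conical singularity $o$: it blows up like $r^{2-n}$ as $r \to 0$ and decays to $0$ on the asymptotically flat end. Since the constant function $1$ is $\widetilde g$-harmonic, $u = 1 + G$ is then harmonic, and $u \ge 1$ will follow from $G > 0$. Near $o$ this gives $u = r^{2-n} + 1 + o(r^{2-n+\alpha'})$; since $r^{2-n+\alpha'} \to +\infty$ as $r \to 0$ (here $0 < \alpha' < \alpha$, and we may take $\alpha < n-2$, shrinking the asymptotic order in \defref{defn-conic-mfld} if necessary), the constant $1$ is itself $o(r^{2-n+\alpha'})$ and is absorbed, yielding the first line of (\ref{eqn-asymptotic-harmonic-function-ACyl}); on the asymptotically flat end $G \to 0$ supplies the second line $1 + A\rho^{2-n} + o(\rho^{\beta'})$.

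To construct $G$, I would first fix a smooth cutoff $\chi(r)$ with $\chi \equiv 1$ for $r \le \tfrac12$ and $\chi \equiv 0$ for $r \ge 1$, and set $w := \chi(r)\, r^{2-n}$ on the conical neighborhood $U_o \setminus \{o\} \simeq (0,1) \times N$, extended by $0$ on the rest of $\widetilde M \# T^n$. On the exact model cone $\g = dr^2 + r^2 g^N$ one computes $\Delta_{\g}(r^{2-n}) = 0$; hence, using $|\on^k h|_{\g} = O(r^{\alpha - k})$ from \defref{defn-conic-mfld}, the function $f := -\Delta_{\widetilde g}\, w$ satisfies $f = O(r^{\alpha - n})$ as $r \to 0$, is supported in $\{ r \le 1 \}$, and in particular vanishes on the asymptotically flat end. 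One then solves $\Delta_{\widetilde g}\, v = f$ with $v$ in weighted Hölder (or Sobolev) spaces adapted to the two ends — weight $\delta$ near the cone and weight $\mu \in (2-n, 0)$ at infinity — the key point being that on a manifold carrying both an isolated conical singularity and an asymptotically flat end, $\Delta_{\widetilde g}$ is an isomorphism between such spaces: at infinity this is the standard asymptotically flat linear theory (the Laplacian is invertible for decay orders in $(2-n,0)$), while at the cone it follows from the indicial root analysis, since the indicial roots of $\Delta_{\g}$ at $o$ are $\gamma^{\pm}_\lambda = \tfrac{-(n-2)\pm\sqrt{(n-2)^2+4\lambda}}{2}$ with $\lambda \ge 0$ running over the Laplace eigenvalues of $(N, g^N)$, and the open interval $(2-n,0)$ contains none of them. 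Choosing $\delta \in (2-n+\alpha',\, 2-n+\alpha) \subset (2-n,0)$ makes $f$ lie in the target space of weight $\delta - 2$, so that $v = O(r^{\delta}) = o(r^{2-n+\alpha'})$ near $o$ and $v = O(\rho^{\mu})$ at infinity; elliptic regularity together with the asymptotic expansion at the flat end — available because $f$ is compactly supported — upgrades the latter to $v = A\rho^{2-n} + o(\rho^{\beta'})$ for some $\beta' < 2-n$ and some constant $A$. Then $G := w + v$ is harmonic on $\widetilde M \# T^n \setminus \{o\}$ with the asserted asymptotics.

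It remains to check $G > 0$ and $A > 0$. Positivity follows from the maximum principle: $G$ is harmonic away from $o$, $G \to +\infty$ as $r \to 0$ and $G \to 0$ at infinity, so a negative infimum would have to be attained at an interior point, forcing $G$ constant — a contradiction; hence $G \ge 0$, and then $G > 0$ by the strong maximum principle. For the sign of $A$, note that distributionally $\Delta_{\widetilde g}\, G = -(n-2)\,\vol(N, g^N)\,\delta_o$ — the cone-tip analogue of $\Delta_{\R^n}(\rho^{2-n}) = -(n-2)\,\vol(\mathbb{S}^{n-1})\,\delta_0$ — obtained by integrating $\Delta_{\widetilde g}\, w$ over $\{ r \ge \varepsilon \}$ and letting $\varepsilon \to 0$, the flux through $\{ r = \varepsilon \}$ converging to $(n-2)\,\vol(N, g^N)$. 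Comparing with the flux of $A\rho^{2-n}$ through large coordinate spheres via the divergence theorem yields $A = \vol(N, g^N)/\vol(\mathbb{S}^{n-1}) > 0$. Setting $u := 1 + G$ then finishes the construction.

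The step I expect to be the main obstacle is the weighted linear theory behind the solvability of $\Delta_{\widetilde g}\, v = f$: one must develop the Fredholm theory for $\Delta_{\widetilde g}$ on function spaces that simultaneously encode asymptotic flatness at infinity and conical behavior at $o$, locate the indicial roots at $o$ and verify the gap $(2-n, 0)$, match it with the admissible decay range at the asymptotically flat end, and then extract the sharp two-ended asymptotic expansion of the solution. This is precisely the linear analysis on asymptotically flat manifolds with isolated conical singularities developed in the references; granting it, the construction above is routine.
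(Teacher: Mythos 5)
Your proposal is correct and follows essentially the same route as the paper's source for this lemma (the paper itself only cites Lemma 4.1 of \cite{DSW-PMT-nonspin}, but the method there, mirrored in the paper's own Lemma \ref{lem-conformal-laplacian-solution} for the conformal Laplacian, is exactly yours): glue the model solution $\chi(r)r^{2-n}$, kill the $O(r^{\alpha-n})$ error by the weighted Fredholm/surjectivity theory on the doubly-ended manifold, bootstrap the asymptotics, and use the maximum principle and a flux computation for $u\ge 1$ and $A>0$. Your observation that one must shrink $\alpha$ so that $\alpha'<n-2$ in order to absorb the constant $1$ into $o(r^{2-n+\alpha'})$ is a correct and worthwhile point of care.
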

%For proof of Lemma \ref{lem-harmonic-function-ACyl}, see Lemma 4.1 in \cite{DSW-PMT-nonspin}.

Now we are ready to prove the following non-existence result.

\begin{proof}[Proof of non-existence result in Theorem \ref{thm-main-1}]
%{\em Proof of non-existence result in Theorem \ref{thm-main-1}}:
We prove it by contraction. Assume that there exists a Riemannian metric $g$ on $M \# T^n$ with a single conical singularity at $o$, whose scalar curvature $\Sc_g$ on the regular part is nonnegative and strictly positive somewhere. Then, as described right after Lemma \ref{lem-AF-PSC}, by applying the surgery of Gromov and Lawson \cite{Gromov-Lawson-80}, we obtain an asymptotically flat manifold with a single conical singularity at $o$, denoted by $(\widetilde{M} \# T^n, \widetilde{g}, o)$, whose scalar curvature $\Sc_{\widetilde{g}} \geq 0$ and strictly positive at some point.

Now we use the harmonic function $u$ in \lemref{lem-harmonic-function-ACyl}  to do a conformal change for the metric $\widetilde{g}$, and let
\begin{equation}
\widetilde{g}_{u} := u^{\frac{4}{n-2}} \widetilde{g} \ \ \text{on} \ \ \widetilde{M} \# T^n.
\end{equation}
By the partial asymptotic expansion of $u$ at infinity of the Euclidean end (i.e. as $\rho \to +\infty$) in (\ref{eqn-asymptotic-harmonic-function-ACyl}), $\widetilde{g}_u$ still has an asymptotically flat end. On the other hand, because the surgery is operated locally near $x$, we have that, in a sufficiently small neighborhood of the conically singular point $o$,
\begin{equation}
\widetilde{g} = g = dr^2 + r^2 g^N + h,
\end{equation}
where $h$ satisfies asymptotical control in (\ref{eqn-cone-metric-asymptotic}), and then
\begin{equation}
\widetilde{g}_u
= u^{\frac{4}{n-2}} \widetilde{g}
=  r^{-4}\left( 1+ o(r^{\alpha^\prime})\right) \left(dr^2 + r^2 g^N + h \right), \ \ \text{as} \ \ r \to 0.
\end{equation}
Now we do a change of variable, by letting $s = \frac{1}{r}$, and obtain
\begin{equation}
\widetilde{g}_u = (1 + o(s^{-\alpha^\prime}))(ds^2 + s^2 g^N + \tilde{h}),
\end{equation}
where $|\tilde{h}|_{ds^2 + s^2 g^N} = O(s^{-\alpha})$ as $s \to \infty$. Therefore,
\begin{equation}
\widetilde{g}_u = ds^2 + s^2 g^N + o(s^{-\alpha^\prime}), \ \ \text{as} \ \ s \to +\infty,
\end{equation}
since $0 < \alpha^\prime < \alpha$. Note that the metric $\widetilde{g}_u$ is asymptotically conical, tending to the infinite end of a cone as $s \to \infty$ (i.e. $r \to 0$). In particular, $\widetilde{g}_u$ is a complete Riemannian metric on $\widetilde{M} \# T^n$.

Moreover, the scalar curvature of $\widetilde{g}_u$ is given by
\begin{equation}
\Sc_{\widetilde{g}_u}
= \tfrac{4(n-1)}{n-2}  u^{-\tfrac{n+2}{n-2}} \left( -\Delta_{\widetilde{g}} u + \tfrac{n-2}{4(n-1)}\Sc_{\widetilde{g}} u \right)
=  u^{-\tfrac{4}{n-2}} \Sc_{\widetilde{g}} \geq 0.
\end{equation}
Recall that $\Sc_{\widetilde{g}}$ is strictly positive somewhere, and so is $\Sc_{\widetilde{g}_u}$. This leads to a contradiction with  Theorem 3 in \cite{Chodosh-Li-Annals} respectively Theorem 1.1 in \cite{Wang-Zhang-22}. 
\end{proof}

%%%%%%%%%%%%%%%%%%%%%%%%%%%%%%%%%%%%%%%%%%%%%%%%%%%%%%%

\section{Analysis on compact Riemannian manifolds with a conical singularity}\label{sect-analysis}

In this section, we provide some analysis preliminaries about the conformal Laplacian on a compact manifold with a conical singularity. This will be used to solve conformal Laplace equation and then give another proof for the non-existence result in Theorem \ref{thm-main-1} in Section \ref{sect-non-existence}.

Let $(M^n, g, o)$ be a compact Riemannian manifold with a single conical singularity at $o$ as in \defref{defn-conic-mfld}. In the rest of this section, we assume that the scalar curvature $\Sc_g \geq 0$. In a conical neighborhood of the singular point, 
\begin{equation}
\Sc_g = \frac{1}{r^2}\left( \Sc_{g^N} - (n-1)(n-2) + O(r^\alpha) \right).
\end{equation}
Thus, $\Sc_g \geq 0$ implies that the scalar curvature on the cross section:
\begin{equation}\label{eqn: scalar bound on cross section}
\Sc_{g^N} \geq (n-1)(n-2).
\end{equation}

We choose a cut-off function $ 0 \leq \chi \leq 1$ such that
\begin{equation}
   \chi(x) = \begin{cases}
             1, & d(x, o) < \epsilon, \\
             0, & d(x, o) > 2 \epsilon,
             \end{cases}
\end{equation}
where $\epsilon > 0$ is chosen sufficiently small.
For each $p \geq 1$, $k \in \N$ and $\delta \in \R$, the {\em weighted Sobolev space} $W^{k, p}_{\delta}(M)$ is defined to be the completion of $C^\infty_0(M \setminus \{o\})$ with respect to the {\em weighted Sobolev norm} given by
\begin{equation}
\|u\|^p_{W^{k, p}_{\delta}(M)} := \int_{M} \sum^{k}_{i=0} \left( r^{-p(\delta -i) -n} |\nabla^i u|^p \chi + |\nabla^i u|^p (1 - \chi) \right),
\end{equation}
where $r$ is the radial coordinate on the conical neighborhood $U_o$ in Definition \ref{defn-conic-mfld} and $|\nabla^i u|$ is the norm of $i^{th}$ covariant derivative of $u$ with respect to $g$. We denote $W^{0, p}_\delta$ by $L^p_\delta$.

By using scaling technique, the usual interior elliptic estimates, and the asymptotic control of metric $g$ near conical point in Definition $\ref{defn-conic-mfld}$, we have the following weighted elliptic estimate.
\begin{prop}\label{prop-weight-elliptic-estimate}
If $u \in L^{p}_{\delta}(M)$, and $(-\Delta + \tfrac{n-2}{4(n-1)} \Sc_g ) u \in L^{p}_{\delta-2}(M)$, then
\be
\|u \|_{W^{2, p}_{\delta}(M)} \leq C \left( \| (-\Delta_g + \tfrac{n-2}{4(n-1)}\Sc_g ) u \|_{L^{p}_{\delta-2}(M)} + \|u\|_{L^{p}_{\delta}(M)} \right)
\ee
holds for some constant $C = C(g, n, k)$ independent of $u$.
\end{prop}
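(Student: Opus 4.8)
The plan is to reduce the weighted elliptic estimate to the standard (unweighted) interior elliptic estimate on a fixed annular region via a scaling (dilation) argument, exploiting the fact that near the conical point the metric $g$ is a small perturbation of the exact cone metric $\g = dr^2 + r^2 g^N$, and the latter is invariant in form under the rescaling $r \mapsto \lambda r$. Away from the cone point (on $\{d(x,o) > \epsilon\}$, where the weight is trivial) the estimate is just the usual interior $L^p$ elliptic estimate for the uniformly elliptic operator $-\Delta_g + \frac{n-2}{4(n-1)}\Sc_g$ on a compact smooth piece, so the content is entirely in the conical region $U_o \setminus \{o\} \simeq (0,1) \times N$.

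First I would decompose the conical neighborhood into dyadic annuli $A_j = \{2^{-j-1} \le r \le 2^{-j+1}\}$, $j \ge j_0$, with a fixed multiplicity of overlap. On each $A_j$ I rescale: set $\tilde r = 2^j r$, so that $A_j$ is carried to the fixed annulus $A = \{1/2 \le \tilde r \le 2\} \subset (0,\infty) \times N$. Under this dilation the exact cone metric $\g$ pulls back to $2^{-2j}\g_{\tilde r}$ with $\g_{\tilde r} = d\tilde r^2 + \tilde r^2 g^N$ — i.e. the same cone metric up to an overall constant conformal factor $2^{-2j}$ — and, crucially, the asymptotic control \eqref{eqn-cone-metric-asymptotic} $|\on^k h|_{\g} = O(r^{\alpha-k})$ guarantees that the rescaled perturbation $2^{2j}(\text{pullback of } h)$ and its first two derivatives are $O(2^{-j\alpha})$ in $C^2$ on $A$, uniformly in $j$. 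Hence the rescaled metrics $g_j := 2^{2j}(\text{dilation})^* g$ converge in $C^2(A)$ to the fixed metric $\g_1 = d\tilde r^2 + \tilde r^2 g^N$ as $j \to \infty$; in particular, for $j$ large they are uniformly elliptic on $A$ with uniformly bounded $C^1$ (indeed $C^2$) coefficients and ellipticity constants independent of $j$. One also checks $\Sc_{g_j}$ is uniformly bounded on $A$, using $\Sc_g = r^{-2}(\Sc_{g^N} - (n-1)(n-2) + O(r^\alpha))$, so $\Sc_{g_j} = 2^{2j} r^2 \cdot r^{-2}(\cdots) = O(1)$ on $A$.

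Next I would apply the standard interior $L^p$ elliptic estimate (Calderón–Zygmund / Agmon–Douglis–Nirenberg) for second-order elliptic operators with continuous coefficients on the fixed pair of annuli $A \Subset A'$: there is a constant $C$, depending only on the uniform ellipticity and modulus of continuity of the coefficients — hence uniform in $j$ — such that
\[
\|v\|_{W^{2,p}(A, g_j)} \le C\left( \|(-\Delta_{g_j} + \tfrac{n-2}{4(n-1)}\Sc_{g_j})v\|_{L^p(A', g_j)} + \|v\|_{L^p(A', g_j)} \right).
\]
Then I translate this back: writing $u$ in the original coordinates and tracking how the weighted norms scale, one has $\|u\|_{W^{2,p}_\delta(A_j)}^p \sim \sum_{i=0}^2 2^{j p(\delta-i) } \cdot (\text{rescaled } W^{i,p}\text{-norms on } A)$ after accounting for the volume factor $r^{-n}\,d\vol_g \sim 2^{jn} d\vol_{g_j}$ and the factor $r^{-p(\delta-i)}$; the key point is that the conformal Laplacian operator $-\Delta + \frac{n-2}{4(n-1)}\Sc$ is \emph{conformally covariant} of the right weight, so that $(-\Delta_{g_j} + \frac{n-2}{4(n-1)}\Sc_{g_j})$ corresponds under the scaling to $2^{2j}(-\Delta_g + \frac{n-2}{4(n-1)}\Sc_g)$ acting on the suitably weighted function, matching exactly the shift from $\delta$ to $\delta - 2$ in the weighted norms. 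Summing the annular estimates over $j$ (the finite overlap of $\{A_j'\}$ makes the sum of right-hand sides comparable to the global $L^p_{\delta-2}$ and $L^p_\delta$ norms) and combining with the region away from $o$ yields the claimed global bound. One should note the estimate as stated is \emph{a priori} — it presupposes $u \in L^p_\delta$ and $(-\Delta_g + \cdots)u \in L^p_{\delta-2}$ — so no separate existence/regularity argument is needed here.

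The main obstacle, and the step requiring care, is the bookkeeping of weights: verifying that the exponents $r^{-p(\delta-i)-n}$ in the definition of $\|\cdot\|_{W^{k,p}_\delta}$ transform correctly under the dyadic rescaling so that the shift by $2$ in the weight on the forcing term is exactly compensated by the two derivatives gained, and that the conformal covariance of the operator is the mechanism that makes this work cleanly (rather than, say, needing an extra $r^{-2}$ error). A secondary technical point is handling the finitely many annuli $A_j$ with $j < j_1$ (i.e. $r$ bounded away from $0$) together with the gluing region where $\chi$ transitions and the metric is genuinely $g$, not the cone — there the estimate is just the classical one on a compact manifold-with-boundary piece and can be absorbed into the constant $C$. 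I would also remark that only $C^1$ control of the coefficients (hence $|\on h|_\g = O(r^{\alpha-1})$, the $k=1$ case of \eqref{eqn-cone-metric-asymptotic}) is strictly needed for the $W^{2,p}$ estimate, with the $k=2$ hypothesis being available as extra room.
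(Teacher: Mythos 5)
Your proposal is correct and is essentially the argument the paper intends: the paper offers no detailed proof, only the one-line indication ``by using scaling technique, the usual interior elliptic estimates, and the asymptotic control of the metric near the conical point,'' and your dyadic-annulus rescaling with uniform interior $L^p$ estimates is exactly that, with the weight bookkeeping done correctly. (One small remark: the mechanism matching the shift $\delta\mapsto\delta-2$ is simply that both $-\Delta_g$ and $\Sc_g$ are homogeneous of degree $-2$ under the homothety $g\mapsto\lambda^2 g$, not conformal covariance in the usual sense, and uniform boundedness of the rescaled potential $\Sc_{g_j}$ does use the $k=2$ control of $h$.)
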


Throughout the paper, we always use $\lambda_j$ to denote the eigenvalues of the Schr\"odinger operator $-\Delta_{g^N} + \tfrac{n-2}{4(n-1)}\Sc_{g^N} - \frac{(n-2)^2}{4}$ on the cross section $(N, g^N)$ of the model cone $(C(N), \overline{g})$ in $(\ref{eqn-model-cone-metric})$. By (\ref{eqn: scalar bound on cross section}), clearly $\lambda_j \geq 0$.  We set
\begin{equation}\label{eqn-nu-defn}
\nu_j^{\pm} := \frac{-(n-2)\pm\sqrt{(n-2)^2+4\lambda_{j}}}{2}.
\end{equation}

\begin{defn}\label{defn-critical-cone}
{\rm
We say $\delta \in \R$ is {\em critical } if $\delta = \nu_j^{+}$ or $\delta = \nu_j^{-}$ for some $j \in \mathbb{Z}_{\geq 0}$, where $\nu_j$ is defined as in (\ref{eqn-nu-defn}).
}
\end{defn}

The estimate for Laplacian in Proposition 4.5 in \cite{DSW-PMT-nonspin} can be adapted to the conformal Laplacian, and we have the following:
\begin{prop}\label{prop-refined-weighted-elliptic-estimate}
If $\delta \in \R$ is not critical as in Definition $\ref{defn-critical-cone}$, there exists a constant $C= C(g, n)$ and a compact set $B \subset M \setminus \{o\}$ such that for any $u \in L^2_{\delta}(M)$ with $-\Delta u + \tfrac{(n-2)}{4(n-1)}\Sc_g u \in L^{2}_{\delta-2}(M)$,
\be
\|u\|_{W^{2, 2}_{\delta}(M)} \leq C  %\left( \left\|\left( -\Delta  + \frac{n-2}{4(n-1)}\Sc_g \right) u \right\|_{L^{ 2}_{\delta-2}(M)} 
\left( \| (-\Delta_g + \tfrac{n-2}{4(n-1)}\Sc_g ) u \|_{L^{p}_{\delta-2}(M)} + \|u\|_{L^{2}(B)} \right).
\ee
\end{prop}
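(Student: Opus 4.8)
The plan is to follow the standard pattern for Fredholm-type estimates on manifolds with conical ends, adapting Proposition 4.5 of \cite{DSW-PMT-nonspin} from the Laplacian to the conformal Laplacian $L_g := -\Delta_g + \tfrac{n-2}{4(n-1)}\Sc_g$. First I would reduce to the model situation near the cone tip. Writing $L_g$ in the conical coordinates $(r,\theta) \in (0,1)\times N$ and using the asymptotic control $|\overline\nabla^k h|_{\overline g} = O(r^{\alpha-k})$ from Definition \ref{defn-conic-mfld}, one has $L_g = L_{\overline g} + E$, where $L_{\overline g}$ is the conformal Laplacian of the exact cone metric $\overline g = dr^2 + r^2 g^N$ and $E$ is an error term whose coefficients decay like $r^{\alpha-2}$ relative to the scaling of $L_{\overline g}$ (which is homogeneous of degree $-2$). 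The operator $L_{\overline g}$ separates variables: after the substitution $u = r^{-(n-2)/2}v$ and Mellin transform in $r$, it becomes $-\partial_r^2 + r^{-2}\bigl(-\Delta_{g^N} + \tfrac{n-2}{4(n-1)}\Sc_{g^N} - \tfrac{(n-2)^2}{4}\bigr)$, i.e. the indicial roots are exactly the $\nu_j^\pm$ of (\ref{eqn-nu-defn}). This is why the hypothesis ``$\delta$ not critical'' (Definition \ref{defn-critical-cone}) is precisely the condition under which the model operator $L_{\overline g} : W^{2,2}_\delta \to L^2_{\delta-2}$ is an isomorphism on the exact cone.

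Second, I would establish the estimate on the exact model cone: for $\delta$ not critical there is $C$ with $\|v\|_{W^{2,2}_\delta(C(N))} \le C \|L_{\overline g} v\|_{L^2_{\delta-2}(C(N))}$ for all $v \in W^{2,2}_\delta$ supported near the tip. This is the usual consequence of the Mellin-transform analysis: on the line $\{\Re z = -\delta - \tfrac{n-2}{2} + \tfrac{n}{2}\}$ (or whatever the correct weight-to-Mellin-variable dictionary is in the paper's conventions) the symbol $-z^2 + (\text{eigenvalue on } N)$ is invertible with bounded inverse because that line avoids all indicial roots, and Plancherel plus elliptic regularity on the cross section upgrade this to the $W^{2,2}_\delta$ bound. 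Third, I would patch: choose $\epsilon$ small enough that on $U_o = \{r < 2\epsilon\}$ the error $E$ has operator norm (as a map $W^{2,2}_\delta \to L^2_{\delta-2}$ restricted to that collar) small enough to be absorbed — this uses that $E$'s coefficients are $O(r^\alpha)$ smaller than those of $L_{\overline g}$, so shrinking the collar makes the perturbation arbitrarily small. Away from the tip, on the compact manifold-with-boundary piece $M \setminus U_o$, the ordinary interior elliptic estimate (Proposition \ref{prop-weight-elliptic-estimate}, or plain Schauder/$L^2$ theory) gives $\|u\|_{W^{2,2}} \le C(\|L_g u\|_{L^2} + \|u\|_{L^2})$. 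Gluing these with a partition of unity subordinate to $\{r<2\epsilon\}$ and $\{r>\epsilon\}$, commuting the cutoffs through $L_g$ (the commutators are first-order, supported on the fixed annulus $\epsilon < r < 2\epsilon$, hence controlled by $\|u\|_{L^2(B)}$ for a suitable compact $B$), yields
\[
\|u\|_{W^{2,2}_\delta(M)} \le C\bigl(\|L_g u\|_{L^2_{\delta-2}(M)} + \|u\|_{L^2(B)}\bigr),
\]
which is the claim (the $\|u\|_{L^p_\delta}$ term of Proposition \ref{prop-weight-elliptic-estimate} has been improved to $\|u\|_{L^2(B)}$ on a fixed compact set precisely by the non-criticality of $\delta$, which removes the need to control $u$ near the tip by $u$ itself).

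The main obstacle I expect is the model-cone isomorphism/estimate: verifying carefully that the indicial roots are exactly $\nu_j^\pm$ with the paper's normalization of the conformal Laplacian and its weighted spaces, and that ``$\delta$ not critical'' lands the Mellin contour strictly between consecutive roots so that the inverse symbol is uniformly bounded. Everything downstream — the perturbation absorption and the partition-of-unity gluing — is routine once that spectral picture is pinned down, and in any case the parallel statement for the plain Laplacian is already available as Proposition 4.5 of \cite{DSW-PMT-nonspin}, so the real work is just checking that adding the zeroth-order potential $\tfrac{n-2}{4(n-1)}\Sc_g$ only shifts the cross-sectional operator by $\tfrac{n-2}{4(n-1)}\Sc_{g^N}\cdot r^{-2}$ near the tip (plus a lower-order-in-scaling term from $\Sc_g - r^{-2}\Sc_{g^N}$, which is $O(r^{\alpha-2})$ and hence part of the absorbable error $E$), thereby changing the eigenvalues $\lambda_j$ of the cross-sectional Schrödinger operator but not the structure of the argument.
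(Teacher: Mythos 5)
Your proposal is correct and follows essentially the same route the paper intends: the paper gives no independent proof but simply adapts Proposition 4.5 of \cite{DSW-PMT-nonspin}, whose argument is exactly the model-cone/indicial-root analysis, perturbation absorption on a shrinking collar, and partition-of-unity patching that you describe. Your identification of the indicial roots $\nu_j^{\pm}$ from the cross-sectional Schr\"odinger operator $-\Delta_{g^N}+\tfrac{n-2}{4(n-1)}\Sc_{g^N}-\tfrac{(n-2)^2}{4}$ is also the correct accounting of how the zeroth-order potential modifies the plain-Laplacian case.
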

A direct consequence of Proposition \ref{prop-refined-weighted-elliptic-estimate} is the Fredholm property of the unbounded operator:
\begin{eqnarray*}
( -\Delta_{g} + \tfrac{n-2}{4(n-1)} \Sc_g )_\delta: {\rm Dom} ( -\Delta_{\delta} + \tfrac{n-2}{4(n-1)} \Sc_g )_\delta & \rightarrow & L^2_{\delta-2}(M) \\
 u \hspace*{1in} & \mapsto & - \Delta_g u + \tfrac{n-2}{4(n-1)}\Sc_g u,
\end{eqnarray*}
where ${\rm Dom} ( -\Delta_{g} + \tfrac{n-2}{4(n-1)}\Sc_g )_\delta$ is dense subset of $L^2_{\delta}(M)$ consisting of function $u$ such that $-\Delta_g u + \tfrac{n-2}{4(n-1)}\Sc_g u \in L^2_{\delta-2}(M)$ in the sense of distributions.

\begin{prop}\label{prop-Fredholm}
If $\delta \in \R$ is not critical as in Definition $\ref{defn-critical-cone}$, then the operator $ \big( -\Delta_{g} + \tfrac{n-2}{4(n-1)}\Sc_g \big)_\delta $ is Fredholm, namely, it has closed range and 
\begin{enumerate}[$(1)$]
%\item ${\rm Ran}\left( \left( -\Delta_{g} + \frac{n-2}{4(n-1)} \Sc_g \right)_\delta \right)$ is closed,
\item ${\rm dim}( {\rm Ker}( -\Delta_g + \tfrac{n-2}{4(n-1)}\Sc_g )_\delta ) < +\infty$,
\item ${\rm dim} ( {\rm Ker} ( -\Delta_g + \tfrac{n-2}{4(n-1)}\Sc_g )^*_\delta ) < +\infty$.
\end{enumerate}
\end{prop}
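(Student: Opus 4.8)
The plan is to deduce Proposition~\ref{prop-Fredholm} as a formal consequence of the refined weighted estimate in Proposition~\ref{prop-refined-weighted-elliptic-estimate}, together with the standard abstract fact that a densely defined closed operator with a suitable \emph{a priori} estimate (modulo a compact perturbation) is semi-Fredholm, and an identical argument applied to the formal adjoint yields the full Fredholm property.

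First I would check that the operator $P_\delta := \big(-\Delta_g + \tfrac{n-2}{4(n-1)}\Sc_g\big)_\delta$ with the stated domain is closed: this is immediate because $P$ is a differential operator and the graph-closedness follows from the fact that convergence in $L^2_\delta$ plus convergence of $P u$ in $L^2_{\delta-2}$ implies, via Proposition~\ref{prop-weight-elliptic-estimate}, convergence in $W^{2,2}_\delta$, so the limit lies in the domain and the graph is closed. Next, to get the finite-dimensional kernel and closed range, I would run the usual functional-analytic argument: on $\mathrm{Ker}(P_\delta)$ the term $\|P_\delta u\|_{L^2_{\delta-2}}$ in Proposition~\ref{prop-refined-weighted-elliptic-estimate} vanishes, so $\|u\|_{W^{2,2}_\delta} \le C\|u\|_{L^2(B)}$; since $B \subset M\setminus\{o\}$ is compact and the inclusion $W^{2,2}_\delta \hookrightarrow L^2(B)$ (i.e.\ restriction to $B$, where the weight is harmless and the embedding is the ordinary Rellich embedding on a relatively compact smooth domain) is compact, the unit ball of $\mathrm{Ker}(P_\delta)$ in the $W^{2,2}_\delta$-norm is compact, forcing $\dim\mathrm{Ker}(P_\delta) < \infty$. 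This gives part $(1)$. For the closed range, I would use the standard trick: pick a closed complement $Z$ of $\mathrm{Ker}(P_\delta)$ inside the domain and show that on $Z$ the estimate improves to $\|u\|_{W^{2,2}_\delta} \le C\|P_\delta u\|_{L^2_{\delta-2}}$ (without the $\|u\|_{L^2(B)}$ term), again by a compactness-contradiction argument exploiting the compact embedding into $L^2(B)$; this coercivity estimate on $Z$ directly yields that $P_\delta$ has closed range.

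For part $(2)$, the point is that the $L^2_\delta$–$L^2_{\delta-2}$ adjoint of $P_\delta$ is, up to identifying dual spaces via weighted pairings, again an operator of the same type but with a shifted weight, namely $P_{\delta'}$ where $\delta' = 2-n-\delta$ (the reflection of $\delta$ through the "balance point" $\tfrac{2-n}{2}$ governing the $L^2$ pairing with weight $r^{-n}$). Since $\delta$ is non-critical if and only if $\delta'$ is non-critical (the critical set $\{\nu_j^\pm\}$ is symmetric about $\tfrac{2-n}{2}$, as is visible from \eqref{eqn-nu-defn} since $\nu_j^+ + \nu_j^- = -(n-2)$), Proposition~\ref{prop-refined-weighted-elliptic-estimate} applies equally to $P_{\delta'}$, and the same kernel-finiteness argument as above gives $\dim\mathrm{Ker}(P_\delta^*) < \infty$. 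Alternatively, and perhaps more cleanly, I would simply cite the abstract lemma that a closed densely defined operator satisfying an estimate of the form $\|u\| \le C(\|Pu\| + \|u\|_{\text{compactly embedded space}})$ is semi-Fredholm (finite-dimensional kernel, closed range), and note that its Banach-space adjoint automatically has finite-dimensional kernel once the range is closed and has finite codimension — but establishing finite codimension of the range is exactly what requires the estimate for the adjoint operator, so one cannot entirely avoid the symmetry observation.

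The main obstacle I anticipate is not any single hard estimate — Proposition~\ref{prop-refined-weighted-elliptic-estimate} does the analytic heavy lifting — but rather the bookkeeping needed to make the adjoint identification precise: one must be careful about which pairing is used to define $P_\delta^*$ (the paper writes $L^2_{\delta-2}(M)$ as the target, so $P_\delta^*$ acts between $L^2_{\delta-2}(M)^* \cong L^2_{-(\delta-2)-n}(M) = L^2_{2-\delta-n}$ and $L^2_\delta(M)^* \cong L^2_{-\delta-n}$, using the weighted $L^2$ duality $L^2_\mu{}^* \cong L^2_{-\mu-n}$), and to verify that under this identification the adjoint is again a conformal-Laplacian-type operator with a non-critical weight so that the refined estimate applies. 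Once that is set up, everything else is the routine semi-Fredholm package. I would therefore structure the proof as: (i) closedness of $P_\delta$; (ii) finite-dimensional kernel via compact Rellich embedding into $L^2(B)$; (iii) closed range via the coercive estimate on a complement of the kernel; (iv) the weighted-duality identification of $P_\delta^*$ with a same-type operator at the reflected non-critical weight $2-n-\delta$, and hence finite-dimensional cokernel by applying (ii) to $P_\delta^*$.
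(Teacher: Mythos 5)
Your proposal is correct and follows essentially the same route as the paper, which defers to the proof of Proposition 4.8 in \cite{DSW-PMT-nonspin}: the semi-Fredholm package (closedness, finite-dimensional kernel via the compact embedding into $L^2(B)$, closed range via coercivity on a complement) driven by Proposition \ref{prop-refined-weighted-elliptic-estimate}, plus the identification under the unweighted $L^2$ pairing of the adjoint with the same operator at the reflected non-critical weight $-\delta+2-n$ — exactly the setup the paper records after the statement. No gaps.
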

For the proof of Proposition \ref{prop-Fredholm}, we refer to the proof of Proposition 4.8 in \cite{DSW-PMT-nonspin}. In the proof, we will be using $( -\Delta_g + \tfrac{n-2}{4(n-1)}\Sc_g )^*_\delta$, the adjoint operator of $( -\Delta_g + \tfrac{n-2}{4(n-1)}\Sc_g )_\delta$. By using the $L^2$ pairing $(\cdot, \cdot)_{L^2(M)}$, it is given as
\begin{eqnarray*}
( -\Delta_g + \tfrac{n-2}{4(n-1)}\Sc_g )^*_\delta : {\rm Dom} ( ( -\Delta_g + \tfrac{n-2}{4(n-1)}\Sc_g )^*_\delta ) & \rightarrow & L^{2}_{-\delta-n} \\
  u \hspace*{1in} & \mapsto & -\Delta_g u + \tfrac{n-2}{4(n-1)}\Sc_g,
\end{eqnarray*}
where the domain ${\rm Dom} ( ( -\Delta_g + \tfrac{n-2}{4(n-1)}\Sc_g )^*_\delta )$ is the dense subset of $L^{2}_{-\delta+2-n}$ consisting of functions $u$ such that $-\Delta_g u + \tfrac{n-2}{4(n-1)}\Sc_g u \in L^2_{-\delta-n}$ in the distributional sense.

Then with the help of the Fredholm property in Proposition \ref{prop-Fredholm}, we prove following surjectivity property.
\begin{prop}
    Let $(M^n, g, o)$ be a compact Riemannian manifold with a conical singularity at $o$ as in Definition \ref{defn-conic-mfld}, and assume that $g$ has positive scalar curvature on the regular part. We have that for any noncritical $\delta \leq \tfrac{2-n}{2}$, the map
\begin{eqnarray*}
( -\Delta_{g} + \tfrac{n-2}{4(n-1)} \Sc_g )_\delta: {\rm Dom} ( -\Delta_{\delta} + \tfrac{n-2}{4(n-1)} \Sc_g ) & \rightarrow & L^2_{\delta-2}(M) \\
 u \hspace*{1in} & \mapsto & - \Delta_g u + \tfrac{n-2}{4(n-1)}\Sc_g u,
\end{eqnarray*}
    is surjective.
\end{prop}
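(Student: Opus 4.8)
The plan is to establish surjectivity by combining the Fredholm property from Proposition~\ref{prop-Fredholm} with a vanishing statement for the cokernel. Since $( -\Delta_{g} + \tfrac{n-2}{4(n-1)} \Sc_g )_\delta$ is Fredholm for noncritical $\delta$, its range is closed and has finite codimension, so it suffices to show that the $L^2$-adjoint $( -\Delta_{g} + \tfrac{n-2}{4(n-1)} \Sc_g )^*_\delta$ has trivial kernel. A function $v$ in this kernel lies in $L^2_{-\delta+2-n}$ and satisfies $-\Delta_g v + \tfrac{n-2}{4(n-1)}\Sc_g v = 0$ in the distributional sense. The hypothesis $\delta \leq \tfrac{2-n}{2}$ translates into a lower bound on the weight $-\delta + 2 - n \geq \tfrac{2-n}{2}$ for $v$, i.e. the adjoint solutions are required to lie in a space that decays no faster than the critical rate $\tfrac{2-n}{2}$ near the cone tip (and correspondingly have controlled behavior there).

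First I would use elliptic regularity (Proposition~\ref{prop-weight-elliptic-estimate} applied to $v$, bootstrapping) to conclude $v$ is smooth on the regular part and that $v \in W^{2,2}_{\delta'}$ for the appropriate weight; then I would invoke the asymptotic expansion theory for solutions of the conformal Laplace equation near an isolated conical singularity (as developed in the cited companion work \cite{DSW-PMT-nonspin}): any such solution is asymptotic to a finite combination of the homogeneous solutions $r^{\nu_j^{\pm}}$ times eigenfunctions on the cross section. Because $v$ is an $L^2_{-\delta+2-n}$ solution of a homogeneous equation with weight at least the critical exponent, and because $\delta$ is noncritical so no indicial root sits exactly at the relevant threshold, the only admissible leading term would force $v$ to have a specific decay/growth. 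The key integration-by-parts step is then: multiply the equation $-\Delta_g v + \tfrac{n-2}{4(n-1)}\Sc_g v = 0$ by $v$ and integrate over $M \setminus B_\epsilon(o)$, controlling the boundary term on $\partial B_\epsilon(o)$ using the asymptotic expansion and the weight constraint, and letting $\epsilon \to 0$. The weight condition $\delta \leq \tfrac{2-n}{2}$ is exactly what makes this boundary term vanish in the limit (the pairing of the leading asymptotic term of $v$ with that of $\nabla v$ across the shrinking sphere of radius $\epsilon$ tends to zero), yielding $\int_M |\nabla v|^2 + \tfrac{n-2}{4(n-1)}\Sc_g v^2 = 0$.

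Finally, since $\Sc_g > 0$ somewhere on the regular part and $\Sc_g \geq 0$ everywhere, the identity $\int_M \big( |\nabla v|^2 + \tfrac{n-2}{4(n-1)}\Sc_g v^2 \big) = 0$ forces $\nabla v \equiv 0$, so $v$ is locally constant, and then $\Sc_g v^2 \equiv 0$ together with $\Sc_g > 0$ somewhere forces $v \equiv 0$ on the connected component where positivity holds, and then on all of $M$ by unique continuation (or by the same argument component-by-component, using that each component must meet the region where the integral constraint bites — here one uses that $M \# T^n$-type manifolds are connected). Hence the cokernel is trivial and the map is surjective.

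The main obstacle I anticipate is the rigorous justification of the boundary-term analysis near the cone tip: one must know precisely the asymptotic expansion of an $L^2$-weighted solution $v$ of the conformal Laplace equation in terms of the indicial roots $\nu_j^{\pm}$, verify that the noncriticality of $\delta$ excludes the borderline logarithmic/critical terms, and check that the specific weight bound $\delta \leq \tfrac{2-n}{2}$ is sharp for killing the boundary integral $\int_{\partial B_\epsilon(o)} ( v \, \partial_r v ) \, dA$ as $\epsilon \to 0$. This is precisely where the analysis developed in \cite{DSW-PMT-nonspin} (and adapted here to the conformal Laplacian, as in Propositions~\ref{prop-weight-elliptic-estimate}--\ref{prop-Fredholm}) must be invoked carefully, keeping track of the interplay between the conical and, potentially, topologically singular nature of the cross section $(N, g^N)$.
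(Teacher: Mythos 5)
Your overall architecture matches the paper's: both reduce surjectivity to showing that the $L^2$-adjoint has trivial kernel (using the closed range from Proposition~\ref{prop-Fredholm}), both run an integration-by-parts/energy identity on a kernel element $v$, and both conclude from $\Sc_g \geq 0$ with $\Sc_g > 0$ somewhere that $v$ is constant and hence zero. The genuine difference is in how the cone tip is handled in the energy identity. You propose to first establish a pointwise asymptotic expansion of $v$ in terms of the indicial roots $\nu_j^{\pm}$ and then estimate the boundary integral $\int_{\partial B_\epsilon(o)} v\,\partial_r v$ directly, checking that $\delta \leq \tfrac{2-n}{2}$ kills it as $\epsilon \to 0$; this works (the leading exponent of $v$ exceeds $-\delta+2-n \geq \tfrac{2-n}{2}$, so the boundary term is $O(\epsilon^{2\mu+n-2})$ with $2\mu+n-2>0$), but it leans on the full expansion machinery, which you correctly flag as the main obstacle and do not execute. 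The paper avoids this entirely: it tests the equation against $\chi_{r_0}^2 u$ for a Lipschitz cutoff $\chi_{r_0}$ vanishing on $B_{r_0}(o)$ with $|d\chi_{r_0}| \lesssim r_0^{-1}$, obtaining $\int |\nabla(\chi_{r_0}u)|^2 \leq \int |d\chi_{r_0}|^2|u|^2 \lesssim \int_{A_{r_0}} r^{-2}|u|^2$, and this error tends to zero solely because membership in $L^2_{(2-n)/2}$ means exactly that $\int r^{-2}|u|^2 < \infty$ near the tip. So the weight hypothesis is used only through an integrability statement, no pointwise asymptotics or noncriticality of intermediate exponents needed at this step. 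Your route is viable but heavier; if you pursue it, you must actually prove the expansion for a distributional $L^2_{-\delta+2-n}$ solution (including control of $\partial_r v$), whereas the cutoff device makes the argument self-contained. One further small point: your closing appeal to unique continuation and a component-by-component argument is unnecessary — once $\nabla v \equiv 0$ on the connected regular part, $v$ is a constant and $\Sc_g v \equiv 0$ with $\Sc_g \not\equiv 0$ immediately gives $v \equiv 0$.
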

\begin{proof}
Note that $ \tfrac{2-n}{2}$ is not critical as in Definition \ref{defn-critical-cone}, since $\lambda_{j} \geq 0$.

For any noncritical $\delta \leq \tfrac{2-n}{2}$, we have
\be
-\delta + 2 -n \geq \tfrac{2-n}{2}.
\ee
Thus
\be
{\rm Dom}( ( -\Delta_{g} + \tfrac{n-2}{4(n-1)} \Sc_g)^*_\delta )
\subset L^{2}_{-\delta + 2 -n}(M) \subset L^2_{\tfrac{2-n}{2}}(M).
\ee

For every small number $r_0$, we can choose a smooth cut-off function
\be
0 \leq \chi_{r_0} \leq 1
\ee
such that
\be
\chi_{r_0}
= \begin{cases}
    0, & \text{on} \ \ B_{r_0}(o), \\
    1, & \text{on} \ \ ( B_{2r_0}(o))^c ,
   \end{cases}
\ee
and
\be
|d \chi_{r_0}|
\leq
        \tfrac{10}{r_0}.
\ee
For any $u \in {\rm Ker}( ( -\Delta_{g} + \tfrac{n-2}{4(n-1)} \Sc_g )^*_\delta ) \subset L^{2}_{\tfrac{2-n}{2}}(M)$, we have
\begin{eqnarray*}
\int_{M} |  \nabla (\chi_{r_0} u)|^2
& = &  \int_M |d\chi_{r_0}|^2 |u|^2 - \int_{M} \left( \chi_{r_0} \right)^2 \langle u, \Delta_g u \rangle \\
& = & \int_M |d\chi_{r_0}|^2 |u|^2 - \int_{M} \left( \chi_{r_0} \right)^2 \tfrac{n-2}{4(n-1)} \Sc_g u^2 \\
& \leq & \int_M |d\chi_{r_0}|^2 |u|^2,
\end{eqnarray*}
since $ \Sc_g \geq 0$ on $M$.
This then implies
\begin{eqnarray}
\int_{(B_{2r_0}(o))^c} |\nabla \varphi|^2
 \leq   C \left( \int_{A_{r_0}} |u|^2 r^{-2}\right)
 \rightarrow  0, \ \ \text{as} \ \ r_0 \rightarrow 0,
\end{eqnarray}
since
\be
\| u \|^2_{L^2_{\tfrac{2-n}{2}(M)}} =  \int_{M}  \left( r^{-2} | \varphi |^2 \chi + | \varphi |^2 (1-\chi) \right) d\vol_{g} < \infty.
\ee
Here $A_{r_0}$ is the annular region $B_{2r_0}(o) \setminus B_{r_{0}}(o)$.
Thus $\nabla u =0$, hence $u$ must be a constant function. As a result,
\begin{equation}
    0 = -\Delta_g u + \tfrac{n-2}{4(n-1)} \Sc_g u = \tfrac{n-2}{4(n-1)} \Sc_g u,
\end{equation}
and so $u \equiv 0$, since $\Sc_g$ is not identically zero.
This shows  ${\rm Ker}( ( -\Delta_g + \tfrac{n-2}{4(n-1)} \Sc_g )^*_\delta ) = \{0\}$.

Now for any fixed $v \in C^\infty_0 (M)$, if
\be
(v, ( -\Delta_g + \tfrac{n-2}{4(n-1)} \Sc_g )_\delta u )_{L^2(M)} = 0,  \ \ \forall u \in {\rm Dom}( ( -\Delta_g + \tfrac{n-2}{4(n-1)} \Sc_g )_\delta),
\ee
then
\be
( ( -\Delta_g + \tfrac{n-2}{4(n-1)} \Sc_g )^*_\delta v, u )_{L^2(M)} = 0, \ \ \forall u \in {\rm Dom}( ( -\Delta_g + \tfrac{n-2}{4(n-1)} \Sc_g )_\delta ).
\ee
By the density of $C^\infty_0(M) \subset {\rm Dom}( ( -\Delta_g + \tfrac{n-2}{4(n-1)} \Sc_g )_\delta )$ in $L^2(M)$, this implies that $( -\Delta_g + \tfrac{n-2}{4(n-1)} \Sc_g )^*_\delta v = 0$, and so $v = 0$. Therefore, ${\rm Ran}(( -\Delta_g + \tfrac{n-2}{4(n-1)} \Sc_g )_\delta)^{\perp}\cap C^\infty_0(M)=\{0\}$. As a result, the density of $C^\infty_0(M)$ and closeness (follows from Proposition \ref{prop-Fredholm}) of ${\rm Ran}( -\Delta_{\delta} + \tfrac{n-2}{4(n-1)} \Sc_g)$ in $L^2_{\delta-2}(M)$ implies that ${\rm Ran}(( -\Delta_g + \tfrac{n-2}{4(n-1)} \Sc_g )_\delta) = L^2_{\delta-2}(M)$, i.e. $( -\Delta_g + \tfrac{n-2}{4(n-1)} \Sc_g )_\delta$ is surjective.
\end{proof}

By Definition \ref{defn-critical-cone} and the fact that $\lambda_j \geq 0$, one can easily see that there is no critical index in the interval $(2-n, 0)$.
As a result, similarly as in Proposition 4.15 in \cite{DSW-PMT-nonspin}, we can extend the range of noncritical indices $\delta$ for which $-\Delta_\delta + f$ is surjective as following.
\begin{prop}\label{prop-surjectivity}
For noncritical $\delta < 0$ as in Definition \ref{defn-critical-cone},
\be
( -\Delta_g + \tfrac{n-2}{4(n-1)} \Sc_g )_\delta: {\rm Dom} (( -\Delta_g + \tfrac{n-2}{4(n-1)} \Sc_g )_\delta ) \rightarrow L^{2}_{\delta-2}(M)
\ee
is surjective.
\end{prop}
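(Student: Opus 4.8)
The plan is to reduce the statement to a vanishing result for the weighted kernel of the conformal Laplacian, using the Fredholm property of \propref{prop-Fredholm} together with the description of the adjoint recalled above. Write $L := -\Delta_g + \tfrac{n-2}{4(n-1)}\Sc_g$. Since $L_\delta$ has closed range, it is surjective if and only if its cokernel vanishes, i.e. $\operatorname{Ker}(L_\delta^*) = \{0\}$. Inspecting the adjoint operator described above, $L_\delta^*$ is exactly $L$ acting with domain weight $2-n-\delta$ (and target weight $(2-n-\delta)-2$), so $\operatorname{coker}(L_\delta) \cong \operatorname{Ker}(L_{2-n-\delta})$; note moreover that, since $\nu_j^+ + \nu_j^- = 2-n$, the set of critical weights is symmetric about $\tfrac{2-n}{2}$, so $\delta$ is noncritical if and only if $2-n-\delta$ is. The previous proposition already yields surjectivity of $L_\delta$ for all noncritical $\delta \le \tfrac{2-n}{2}$; since $(2-n,0)$ consists entirely of noncritical weights and $\tfrac{2-n}{2}\in(2-n,0)$, the only case left to handle is $\delta\in(\tfrac{2-n}{2},0)$, for which $2-n-\delta\in(2-n,\tfrac{2-n}{2})\subset(2-n,0)$.

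So the proof reduces to showing that $\operatorname{Ker}(L_{\delta'})=\{0\}$ for every $\delta'\in(2-n,0)$. First I would record that translating the already-established surjectivity through the identification $\operatorname{coker}(L_\delta)\cong\operatorname{Ker}(L_{2-n-\delta})$ gives $\operatorname{Ker}(L_{\delta'})=\{0\}$ for every noncritical $\delta'\ge\tfrac{2-n}{2}$; in particular $\operatorname{Ker}(L_{(2-n)/2})=\{0\}$. Next I would invoke the standard fact that the $L^2_{\delta'}$-kernel of $L$ does not change as $\delta'$ varies over a connected component of noncritical weights. The justification is the usual one for conically singular elliptic operators: if $Lu=0$ on a punctured neighbourhood of $o$, then separation of variables on the model cone $dr^2+r^2g^N$, the asymptotic control \eqref{eqn-cone-metric-asymptotic} of $g$ near $o$, and interior elliptic estimates (as in the proof of Proposition 4.15 in \cite{DSW-PMT-nonspin}) force $u$ to admit, as $r\to0$, an asymptotic expansion whose leading exponents are the indicial roots $\nu_j^{\pm}$ of \eqref{eqn-nu-defn} (times eigenfunctions on $(N,g^N)$, up to lower-order corrections); whether such a $u$ lies in $L^2_{\delta'}$ depends only on which of the $\nu_j^{\pm}$ exceed $\delta'$, and this set does not change while $\delta'$ stays off the critical set. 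Hence $\operatorname{Ker}(L_{\delta'})$ is constant on $(2-n,0)$, equal to its value at $\tfrac{2-n}{2}$, namely $\{0\}$.

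Putting the pieces together: for noncritical $\delta\in(\tfrac{2-n}{2},0)$ one has $2-n-\delta\in(2-n,0)$, so $\operatorname{coker}(L_\delta)\cong\operatorname{Ker}(L_{2-n-\delta})=\{0\}$ and $L_\delta$ is surjective; combined with the previous proposition this covers all noncritical $\delta<0$. The one genuinely analytic ingredient — and the step I expect to be the main obstacle to write out carefully — is the asymptotic-expansion analysis of homogeneous solutions at the conical point underlying the local constancy of $\operatorname{Ker}(L_{\delta'})$ in the weight; everything else is a formal consequence of the Fredholm alternative and the symmetry of the indicial roots. As an alternative, one could instead follow verbatim the argument for Proposition 4.15 in \cite{DSW-PMT-nonspin}, moving the weight from $\tfrac{2-n}{2}$ toward $0$ and checking that no cokernel is created since no critical weight is crossed.
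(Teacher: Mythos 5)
Your argument is correct and is essentially the route the paper takes: the paper gives no details, simply observing that $(2-n,0)$ contains no critical weights and citing the analogous Proposition 4.15 of \cite{DSW-PMT-nonspin}, whose proof is exactly your combination of the Fredholm alternative, the identification $\operatorname{coker}(L_\delta)\cong\operatorname{Ker}(L_{2-n-\delta})$, the symmetry of the indicial roots about $\tfrac{2-n}{2}$, and the constancy of the weighted kernel on noncritical intervals. You correctly flag the asymptotic-expansion step for homogeneous solutions at the cone tip as the one ingredient requiring real work; everything else matches the intended proof.
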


%%%%%%%%%%%%%%%%%%%%%%%%%%%%%%%%%%%%%%%%%%%%%%%%%%%%%%%%%%%%%%%%%%%%%%%%%%%%%%%%%%%%%%%%%%%%%%%
\section{Another proof of the non-existence result in Theorem \ref{thm-main-1}}\label{sect-non-existence}

\begin{lem}\label{lem-conformal-laplacian-solution}
Let $(M^n, g, o)$ be a $n$-dimensional compact Riemannian manifold with a single conical singularity at $o$, and assume that $g$ has positive scalar curvature on the regular part. Then the equation
\begin{equation}
-\Delta_g u + \tfrac{n-2}{4(n-1)} \Sc_g u =0
\end{equation}
has a solution $u$ satisfying the following asymptotic condition near the singular point $o$:
\begin{equation}
u = a(x) r^{\nu^-_1} + o(r^{\nu^-_1+\alpha^\prime}), \ \ \text{as} \ \ r \to 0,
\end{equation}
for $0 < \alpha^\prime < \alpha$, where $\alpha$ is the decay order in (\ref{eqn-cone-metric-asymptotic}), $\nu^-_1 = \tfrac{-(n-2) - \sqrt{(n-1)^2 + 4 \lambda_1}}{2}$, $\lambda_1$ is the least eigenvalue of $-\Delta_{g^N} + \tfrac{n-2}{4(n-1)}\Sc_{g^N} - \tfrac{(n-2)^2}{4}$, and $a(x) >0$ is a corresponding eigenfunction on the cross section $N$.
\end{lem}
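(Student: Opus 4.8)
The plan is to build the solution by first producing an approximate solution with the prescribed leading asymptotics near $o$, and then correcting it to an exact solution using the surjectivity result in Proposition \ref{prop-surjectivity}. Let $a(x) > 0$ be the first eigenfunction of the Schr\"odinger operator $L^N := -\Delta_{g^N} + \tfrac{n-2}{4(n-1)}\Sc_{g^N} - \tfrac{(n-2)^2}{4}$ on $(N, g^N)$, with eigenvalue $\lambda_1 \geq 0$; recall $\nu_1^- = \tfrac{-(n-2)-\sqrt{(n-2)^2+4\lambda_1}}{2}$ (the paper's $(n-1)^2$ appears to be a typo for $(n-2)^2$, consistent with (\ref{eqn-nu-defn})). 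A direct computation on the exact model cone $\overline{g} = dr^2 + r^2 g^N$ shows that the conformal Laplacian splits as $-\Delta_{\overline g} + \tfrac{n-2}{4(n-1)}\Sc_{\overline g} = -\partial_r^2 - \tfrac{n-1}{r}\partial_r + \tfrac{1}{r^2} L^N$, and that $r^{\nu_1^-} a(x)$ lies exactly in its kernel, since $\nu_1^-$ solves the indicial equation $\nu(\nu-1) + (n-1)\nu - \lambda_1 = 0$. So I set $u_0 := \chi \, r^{\nu_1^-} a(x)$, where $\chi$ is the cutoff supported near $o$ from Section \ref{sect-analysis}.

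\textbf{Key steps.} First I would compute $f := -\Delta_g u_0 + \tfrac{n-2}{4(n-1)}\Sc_g u_0$ and estimate it in a weighted space. On the region where $\chi \equiv 1$, the terms where derivatives land on $\chi$ vanish, so $f$ is the difference between the conformal Laplacian of $g$ and that of $\overline g$ applied to $r^{\nu_1^-}a(x)$; the asymptotic control (\ref{eqn-cone-metric-asymptotic}) on $h = g - \overline g$ (to order $\alpha$, in $C^2$) gives that each coefficient of the operator differs from the model one by $O(r^{\alpha-k})$ in the appropriate sense, and $\Sc_g - \Sc_{\overline g} = O(r^{\alpha - 2})$ times $r^2$-weights, hence $f = O(r^{\nu_1^- - 2 + \alpha})$ near $o$; away from $o$ it is smooth and compactly supported (coming from the cutoff region $\epsilon < d(\cdot,o) < 2\epsilon$). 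Therefore $f \in L^2_{\delta-2}(M)$ for $\delta = \nu_1^- + \alpha'$ with any $0 < \alpha' < \alpha$, after possibly shrinking $\alpha'$ slightly so that $\delta$ is noncritical (this is possible since the critical set is discrete). Now I check that this $\delta$ satisfies $\delta < 0$: indeed $\nu_1^- < -(n-2) < 0$ and $\alpha'$ can be taken small, so $\delta < 0$. Hence Proposition \ref{prop-surjectivity} applies: there exists $v \in \mathrm{Dom}$ with $-\Delta_g v + \tfrac{n-2}{4(n-1)}\Sc_g v = f$, and $v \in W^{2,2}_\delta(M)$ by the weighted elliptic estimate (Proposition \ref{prop-refined-weighted-elliptic-estimate}). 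Then $u := u_0 - v$ solves the homogeneous equation exactly, and since $v = O(r^\delta) = o(r^{\nu_1^- + \alpha'})$ in the relevant weighted sense (using a weighted Sobolev embedding to pass from $W^{2,2}_\delta$ control to pointwise $o(r^{\nu_1^-+\alpha'})$ decay, as in \cite{DSW-PMT-nonspin}), we get $u = a(x) r^{\nu_1^-} + o(r^{\nu_1^- + \alpha'})$ as $r \to 0$, which is the claim.

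\textbf{Main obstacle.} The routine but delicate point is the error estimate for $f$: one must carefully expand $-\Delta_g - (-\Delta_{\overline g})$ in terms of $h$ and its first two covariant derivatives, track the $r$-weights of every term (the Christoffel symbols of $g$ differ from those of $\overline g$ by $O(r^{\alpha-1})$, the volume factor by $O(r^\alpha)$, etc.), and confirm the net decay gain is exactly $\alpha$ so that $f \in L^2_{\nu_1^-+\alpha'-2}$ — this is where the precise form of Definition \ref{defn-conic-mfld} is used. The second subtlety is the upgrade from $W^{2,2}_\delta$ membership of the correction $v$ to the pointwise little-$o$ statement; strictly this requires either a weighted Sobolev embedding (valid since we are working two derivatives above $L^2$ and $n \geq 3$ forces no borderline issue once $\alpha'$ is chosen generic) or a bootstrap using interior Schauder estimates on dyadic annuli combined with scaling, exactly as in the cited analysis of \cite{DSW-PMT-nonspin}. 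Everything else — the indicial root computation, the choice of noncritical $\delta$, the application of Proposition \ref{prop-surjectivity} — is immediate from the machinery already assembled in Section \ref{sect-analysis}. Positivity of $a(x)$ is the standard first-eigenfunction fact for the Schr\"odinger operator $L^N$ on the closed manifold $N$.
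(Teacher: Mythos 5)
Your proposal follows essentially the same route as the paper's proof: the approximate solution $u_0 = \chi\, a(x) r^{\nu_1^-}$ built from the indicial root, the error estimate $O(r^{\nu_1^- - 2 + \alpha})$ from the asymptotic control on $h$, the correction via Proposition \ref{prop-surjectivity}, and the bootstrap through weighted elliptic estimates and weighted Sobolev embedding to get the pointwise little-$o$ decay of $v$. Your observation that $(n-1)^2$ in the statement should be $(n-2)^2$ (consistent with (\ref{eqn-nu-defn})) and your explicit remark that $\delta$ must be chosen noncritical are both correct refinements of details the paper leaves implicit.
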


\begin{proof}
Choose a cut-off function $\phi$ satisfying
\begin{equation*}
\phi = \begin{cases}
        1, & \text{on} \ \ B_{\frac{1}{2}}(o), \\
        0, & \text{on} \ \ M \setminus B_1(o).
       \end{cases}
\end{equation*}
Let $u_0 = \phi a(x) r^{\nu^-_1}$, which is a smooth function supported in a neighborhood of the conically singular point $o$. Then because $( -\Delta_{\g} + \tfrac{n-2}{4(n-1)} \Sc_{\g} ) a(x) r^{\nu^-_1} = 0$, by the asymptotic control of $g$ near conically singular point $o$ in Definition \ref{defn-conic-mfld}, we have
\begin{equation*}
( -\Delta_g +  \tfrac{n-2}{4(n-1)} \Sc_g )u_0
= O(r^{\nu^-_1 -2 + \alpha}),  \text{as} \ \ r \to 0.
\end{equation*}
By the definition of weighted Sobolev spaces, this implies that
\begin{equation*}
( -\Delta_g +  \tfrac{n-2}{4(n-1)} \Sc_g ) u_0 \in L^2_{\delta-2}(M), \quad \forall \delta < \nu^-_1 + \alpha.
\end{equation*}
Then by applying Proposition \ref{prop-surjectivity}, we obtain $v \in L^{2}_{\delta}$ such that
\begin{equation*}
( -\Delta_g + \tfrac{n-2}{4(n-1)} \Sc_g ) v = ( -\Delta_g + \tfrac{n-2}{4(n-1)} \Sc_g) u_0.
\end{equation*}
So by setting $u = u_0 - v$, we have $( -\Delta_g + \tfrac{n-2}{4(n-1)}\Sc_g ) u = 0$.

Finally, we derive the asymptotic behavior of $v$ near the singular point. Note that for a sufficiently small $\epsilon >0$, in the ball $B_\epsilon (o)$ centered at the singular point $o$, we have $u = a(x)r^{\nu^-_1} - v$, and so
\begin{eqnarray*}
( -\Delta_g + \tfrac{n-2}{4(n-1)} \Sc_g ) v
& = & ( -\Delta_g + \tfrac{n-2}{4(n-1)} \Sc_g ) a(x) r^{\nu^-_1}  \\
& = & O(r^{\nu^-_1 -2 +\alpha}) \in L^{p}_{\delta -2}(B_{\epsilon}(o)), \ \ \forall \delta < \nu^-_1+\alpha \ \ \text{and} \ \  \forall p >1.
\end{eqnarray*}
Then the weighted elliptic estimate in Proposition \ref{prop-weight-elliptic-estimate} implies that $v \in W^{2, 2}_{\delta}(B_{\epsilon}(o))$ for  $\delta < 2-n+\alpha$. Consequently, the weighted Sobolev inequality (e.g. in Proposition 3.4 in \cite{DW-MRL-2020}) implies that $v \in L^{\frac{2n}{n-2}}_{\delta}(B_{\epsilon}(o))$, and by applying weighed elliptic estimate in Proposition \ref{prop-weight-elliptic-estimate} (with $p = \frac{2n}{n-2}$) again, we obtain $v \in W_\delta^{2, \frac{2n}{n-2}}(B_{\epsilon}(o))$ for $ \delta < 2-n+\alpha$. By repeating this process, we can obtain that $v \in W^{2, p}_{\delta}(B_{\epsilon}(o))$ for all $p>1$ and $\delta<2-n+\alpha$, and so by Proposition 3.4 in \cite{DW-MRL-2020}, we have
\begin{equation}
v = o(r^{2-n+\alpha^\prime}), \ \ \text{as} \ \ r \to 0,
\end{equation}
for $0< \alpha^\prime < \alpha$. This completes the proof.
\end{proof}

{\em Another proof of the non-existence result in Theorem \ref{thm-main-1}}: By applying Lemma \ref{lem-conformal-laplacian-solution}, we obtain a function $u$ satisfying the equation
\begin{equation}
-\Delta_g u + \tfrac{n-2}{4(n-1)} \Sc_g u =0,
\end{equation}
and the asymptotic control
\begin{equation}\label{eqn-conformal-laplacian-solution-asymptotic}
u = a(x)r^{\nu^-_1} + o(r^{\nu^-_1+\alpha^\prime}), \ \ \text{as} \ \ r \to 0,
\end{equation}
for $0 < \alpha^\prime < \alpha$. Note that
\begin{equation}
\nu^-_1 = \frac{-(n-2) - \sqrt{(n-1)^2 + 4 \lambda_1}}{2} \leq 2-n,
\end{equation}
since $\lambda_1 \geq 0$, which follow from $\Sc_g \geq 0$.
As a result, the function $u$ tends to $+\infty$ as one approaches the singular point (i.e. $r \to 0$), since $a(x) >0$. As a result,  the infimum of the function $u$ on the whole manifold $M \# T^n$ is achieved in a compact subset away from the singular point, and so $\inf\limits_{M \# T^n \setminus \{o\}} u > -\infty$. Thus we can take a sufficiently large positive number $C$ such that
\begin{equation}
\widetilde{u} :=u + C \geq 1 \ \ \text{on} \ \ M\#T^n \setminus \{o\}.
\end{equation}
Then $\widetilde{u}$ satisfies
\begin{equation}
- \Delta_g \widetilde{u} + \tfrac{n-2}{4(n-1)} \Sc_g \widetilde{u} = C \tfrac{n-2}{4(n-1)} \Sc_g,
\end{equation}
and the asymptotic control as in (\ref{eqn-conformal-laplacian-solution-asymptotic}).

Now we do a conformal change for the metric $g$, and let
\begin{equation}
    \widetilde{g} := (\widetilde{u})^{\frac{4}{n-2}}g \ \ \text{on} \ \ M \# T^n \setminus \{o\}.
\end{equation}
The scalar curvature of $\widetilde{g}$ is given by
\begin{equation}\label{eqn-conformal-scalar-curvature}
\Sc_{\widetilde{g}} = \tfrac{4(n-1)}{(n-2)} \widetilde{u}^{-\tfrac{n+2}{n-2}} \left( - \Delta_g \widetilde{u} + \tfrac{n-2}{4(n-1)} \Sc_g \widetilde{u} \right)
= C \widetilde{u}^{-\tfrac{n+2}{n-2}} \Sc_g \geq 0.
\end{equation}
And $\Sc_{\widetilde{g}}$ is strictly positive at points where $\Sc_g$ is strictly positive.

Moreover, near the conically singular point $o$, the metric $g$ is given as
\begin{equation}
g = dr^2 + r^2 g^N + h,
\end{equation}
where $h$ satisfies (\ref{eqn-cone-metric-asymptotic}), and then
\begin{equation}
\widetilde{g}
= \left( \widetilde{u} \right)^{\tfrac{4}{n-2}} g
=  a(x)^{\tfrac{4}{n-2}}r^{\tfrac{4\nu^-_1}{n-2}}\left( 1+ o(r^{\alpha^\prime})\right) \left(dr^2 + r^2 g^N + h \right), \ \ \text{as} \ \ r \to 0.
\end{equation}
We do a change of variable, by letting $s = \frac{1}{r}$, $s \in (1, +\infty)$, and then rewrite $\widetilde{g}$ as
\begin{equation}
\widetilde{g} = a(x)^{\tfrac{4}{n-2}} s^{-\tfrac{4\nu^-_1}{n-2} -4}(1 + o(s^{-\alpha^\prime}))(ds^2 + s^2 g^N + \widetilde{h}),
\end{equation}
where $|\widetilde{h}|_{ds^2 + s^2 g^N} = O(s^{-\alpha})$ as $s \to \infty$. In addition, because $\nu^-_1 \leq 2-n$, we have $-\frac{4 \nu^-_1}{n-2} - 4 \geq 0$. Thus, $\widetilde{g}$ is a complete Riemannian metric on $\left(M \setminus \{o\} \right) \# T^n$, whose scalar curvature is nonnegative and strictly positive somewhere. This contradicts with Theorem 3 in \cite{Chodosh-Li-Annals} and Theorem 1.1 in \cite{Wang-Zhang-22}.
\qed

%%%%%%%%%%%%%%%%%%%%%%%%%%%%%%%%%%
\section{Rigidity of metrics with conical singularity and zero scalar curvature}\label{sect-rigidity}

In this section, we prove the rigidity result in Theorem \ref{thm-main-1}.

\begin{proof}[Proof of rigidity result in \thmref{thm-main-1}]. 
By the non-existence result in Theorem \ref{thm-main-1}, a metric $g$ with nonnegative scalar curvature and isolated conical singularity must be scalar flat. 
Now, we first prove that $g$ must be Ricci flat by contradiction. Assume that $g$ is Riemannian metric on $M \# T^n$ with a conical singularity at $o \in M$, which is scalar flat, i.e. $\Sc_g =0$, but not Ricci flat, i.e. $\Ric(g)$ is non-zero at some point. Take a smooth cut-off function $\eta$ on $\left( M \# T^n \right) \setminus \{o\}$ , which is equal to zero in a neighborhood of $o$ and is equal to 1 at a point where $\Ric_g \neq 0$. Let $g_t = g - t \eta \Ric_g$ for small $t \geq 0$. Clearly, $g_t = g$ on a neighborhood of $o$ by the choice of the cut-off function $\eta$, and so $g_t$ are metrics with an isolated conical singularity at $o$. We consider the conformal Laplacian of the metric $g_t$:
\begin{equation}
L_{t} := -\Delta_{g_t} + \tfrac{n-2}{4(n-1)} \Sc_{g_t}.
\end{equation}
Recall that the scalar curvature of $g$ near the conical singularity is given by
\begin{equation}
\Sc_g = \frac{1}{r^2}\left( \Sc_{g^N} - (n-1)(n-2) + O(r^\alpha) \right).
\end{equation}
Thus, $\Sc_g \equiv 0$ implies that the scalar curvature of the cross section: $\Sc_{g^N} = (n-1)(n-2) >0$. Then by the spectral property in Theorem 1.1 and Remark 1.3 in \cite{DW-JGA-2018}, the conformal Laplacian $L_t$ have discrete eigenvalues
\begin{equation}
\lambda_1(L_t) < \lambda_2(L_t) \leq \lambda_3(L_t) \leq \cdots.
\end{equation}
Note that the first eigenvalue is simple, i.e. the space of the corresponding eigenfunctions is $1$-dimensional, and one can take an eigenfunction $u_{1, t} >0$ on $\left( M \# T^n \right) \setminus \{o\}$ (see Theorem 6.3 in \cite{DW-JGA-2018}). This implies that $\lambda_1(L_t)$ is a smooth function of $t$. Similar as Proposition 9.1 in \cite{DW-JGA-2018}, we have
\begin{equation}\label{eqn-variation-of-eigenvalue}
\left. \frac{d}{dt}\right|_{t=0} \lambda_{1}(L_t) = C \int_{M\# T^n} \langle - \Ric_g, - \eta \Ric_g \rangle d\vol_{g} >0,
\end{equation}
since $\Ric_g \not\equiv 0$. In the derivation of this variation formula, we have use that at $t=0$ the metric $g$ has scalar curvature identically zero, so the conformal Laplacian $L_0$ has zero as the first eigenvalue and constant functions are corresponding eigenfunctions. The variation formula \eqref{eqn-variation-of-eigenvalue} implies that for a sufficiently small $t_0 >0$, $\lambda_1(L_{t_0})>0$. We take a corresponding eigenfunction $u_{1, t_0} >0$. Moreover, by Theorem 8.3 in \cite{DW-2022}, near the conical singularity $o$, the eigenfunction $u_{1, t_0}$ has the partial asymptotic expansion as:
\begin{equation}\label{eqn-eigenfunction-asymptotic}
    u_{1, t_0} = a + o(r^{\nu^\prime}), \ \ \text{as}  \ \ r \to 0,
\end{equation}
for a constant $a$ and $\nu^\prime > 0$. Recall that $r=0$ corresponds the conically singular point $o$. Because $u_{1, t_0} >0$ on $\left( M \# T^n \right) \setminus \{o\}$, the constant $a \geq 0$. Take a sufficiently small $\delta >0$, and let $\widetilde{u_{1, t_0}}:= u_{1, t_0} + \delta$, and
\begin{equation}
\widetilde{g_{t_0}} := \left( \widetilde{u_{1, t_0}}\right)^{\tfrac{4}{n-2}} g_{t_0}.
\end{equation}
Then $\widetilde{g_{t_0}}$ is a Riemannian metric on $M \# T^n$ with a single conical singularity at $o$, and its scalar curvature is given by
\begin{eqnarray*}
\Sc_{\widetilde{g_{t_0}}}
& = & \tfrac{4(n-1)}{n-2} \left( \widetilde{u_{1, t_0}} \right)^{-\tfrac{n+2}{n-2}} \left( L_{t_0} u_{1, t_0} + \delta\tfrac{n-2}{4(n-1)} \Sc_{g_{t_0}} \right) \\
& = & \tfrac{4(n-1)}{n-2} \left( \widetilde{u_{1, t_0}} \right)^{-\tfrac{n+2}{n-2}} \left( \lambda_{1, t_0} u_{1, t_0} + \delta\tfrac{n-2}{4(n-1)} \Sc_{g_{t_0}} \right).
\end{eqnarray*}
Note that in a sufficiently small neighborhood of $o$ the metric $g_{t_0} = g$ and so $\Sc_{g_{t_0}} = \Sc_g = 0$, and outside of this neighborhood the function $u_{1, t_0}$ has a positive lower bound. Thus, we can take $\delta$ sufficiently small so that $\Sc_{\widetilde{g_{t_0}}} \geq 0$ and strictly positive somewhere. This contradicts with the non-existence result in \thmref{thm-main-1}, and so $\Ric_g \equiv 0$.

Then by Theorem A in \cite{BKMR-AIF-21}, the Riemannian manifold with a single conical singularity, $(M \# T^n, g)$, at $o \in M$, endowed with the distance $d_g$ and the measure $v_g$ induced by $g$, satisfies the ${\rm RCD}(0, n)$ condition. Note that because the dimension $n \geq 3$ and the only singular point is $o$, the stratum $\Sigma^{n-2} = \emptyset$, and so the singular Ricci curvature bounded below by $0$ is equivalent to the Ricci curvature on the regular set bounded below by $0$.  Moreover, the metric space $(M \# T^n, d_g)$ clearly is a connected and locally simply connected, and so it has the simply connected universal cover. As a result, the revised fundamental group  of $(M \# T^n, d_g)$ in \cite{Mondino-Wei-19} is the same as the usual fundamental group, and so it contains $n$ independent generators of infinite order, coming from the fundamental group of $T^n$. Thus, Corollary 1.4 in \cite{Mondino-Wei-19} implies that $(M \# T^n, d_g, v_g)$ is isomorphic as a metric measure space to a flat torus $T^n$. Actually, from their proof in \cite{Mondino-Wei-19}, one can see that $(M\# T^n, g)$ is isometric to a flat torus as Riemannian manifold. 
\end{proof}

%%%%%%%%%%%%%%%%%%%%%%%%%%%%%%%

\appendix

\section{Schoen-Yau-Lohkamp compactification with conical singularity}\label{sect-compactification}
In this appendix, we make some remarks about Schoen-Yau-Lohkamp compactification for asymptotically flat (AF for short) manifolds with finitely many isolated conical singularities.

The Schoen-Yau-Lohkamp compactification consists of two steps. Let $(M^n, g, o)$ be an AF manifold with a conical singularity at $o$. In the first step, one shows that if $\Sc_g \geq 0$ but the ADM mass $m(g) <0$, then there exists an asymptotically flat metric $\tilde{g}$ on $M$ with a conical singularity at $o$ such that $\Sc_{\tilde{g}} \geq 0$, and $\tilde{g} = \tilde{u}^{\frac{4}{n-2}}g_{\mathbb{R}^n}$ outside a large compact set with $\tilde{u} = 1 + \frac{\tilde{m}}{\rho^{n-2}} + O(\rho^{1-n}) (\tilde{m} <0)$ and $\Sc_{\tilde{g}} =0$. Here $\tilde{u}$ is the product of $1+\frac{m(g)}{4(n-1)\rho^{n-2}}$ and the solution to the conformal Laplacian equation (with truncated scalar curvature as the potential) on $M$. Here the conical singularity makes some difference in analysis from the smooth case, and so we provide some derivation for this part in Proposition \ref{prop-compactification-step1} below. 

In the second step, Lohkamp's main observation is that one can truncate the conformal factor $\tilde{u}$ on the AF end to further change the metric $\tilde{g}$ so that the new metric is a constant multiple of $g_{\mathbb{R}^n}$ outside a large compact set in the AF end, while keeping positive scalar curvature. Note that $\Sc_{\tilde{g}} =0$ implies $\Delta_{g_{\mathbb{R}^n}} \tilde{u} =0$ outside a large compact set. This fact and $\tilde{m}<0$ are crucial to guarantee the positivity of the scalar curvature of the new metric. By cutting $M$ off outside a larger compact set $K$ such that $\partial K = \partial [0, 1]^n$ and then gluing the opposite faces of the cube $[0, 1]^n$ together, we obtain  $T^n \# M^n_1$ equipped with a metric that has positive scalar curvature and a conical singularity. This contradicts Theorems \ref{thm-main-1} and \ref{thm-mfld-with-bdry}, thereby establishing the positive mass theorem with isolated conical singularity. This step only works on an AF end, so there is no difference between the conically singular case and the smooth case, for details, see Proposition 6.1 in \cite{Lohkamp1999}.

We will use the following Dirichlet Green's function on AF manifold with boundary in the proof of Lemma \ref{prop-compactification-solve-equation} below.
\begin{prop}[Green's funcction on AF manifold with boundary]\label{prop-Green-function}
Let $(M^n, g)$ be an $n$-dimensional AF smooth manifold with smooth boundary. 
Then there exists a positive Green's function $G(x, y)$ which is smooth on $\left( \mathring{M} \times \mathring{M} \right) \setminus D$, where $D=\{(x, x) \mid x \in \mathring{M}\}$, satisfying properties:
\begin{equation}\label{eqn: laplace G composition}
\int_{M} G(x, y) \Delta f(y) d\vol_g(y) = - f(x), 
\end{equation}
for smooth functions $f$ with $f|_{\partial M} =0$ and $f(x) \to 0$ as $x \to \infty$,
%\begin{equation}
%\Delta_x \int_{M} G(x, y)f(y) d\vol_g(y) = \int_{M} \Delta_x G(x, y) f(y) d\vol_g(y) = - f(x), \ \ \text{for} \ \ f \in C^{\infty}_{0}(M),
%\end{equation}
\begin{equation}
G(x, y) = G(y, x),
\end{equation}
and $G(x, y) = 0$ for $y \in \partial M$ and $ x \in M \setminus \partial M$.

Moreover, let $x_0$ be a point in the AF end, $G(x, x_0)$ has the following expansion, when $\|x-x_0\|$ is large, 
\begin{equation}
G(x, x_0) = \frac{C}{\|x - x_0\|^{n-2}} + O\left( \frac{1}{\|x - x_0\|^{\min\{n-1, k-2+\tau\}}}\right)
\end{equation}
for some constant $C >0$, where $\tau$ is the decay order of the AF metric $g$, $\|x - x_0\|$ is the Euclidean distance between $x$ and $x_0$. Its gradient also has the asymptotic estimate
\begin{equation}
\|\nabla G(x, x_0)\| = O\left( \frac{1}{\|x-x_0\|^{n-1}} \right).
\end{equation}
\end{prop}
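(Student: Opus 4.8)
The plan is to construct $G(x,\cdot)$ for each fixed $x\in\mathring M$ by subtracting an explicit parametrix for the point singularity and then solving the remaining smooth, compactly supported source problem with the weighted linear theory for the Dirichlet Laplacian on an asymptotically flat manifold with compact boundary. Fix $x\in\mathring M$ and a cut-off $\chi$ equal to $1$ near $x$ and supported in a small geodesic ball $B_{2\varrho}(x)\subset\mathring M$, and put $G_0(x,\cdot):=c_n\,\chi\,d_g(x,\cdot)^{2-n}$, where $c_n$ is the dimensional constant making $c_n d_g(x,\cdot)^{2-n}$ the leading local fundamental solution at $x$; then $\Delta_{g,y}G_0(x,y)=-\delta_x+F_x(y)$ with $F_x$ smooth and supported in $B_{2\varrho}(x)$. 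I would then solve
\begin{equation*}
\Delta_g w_x=-F_x\ \text{ on }M,\qquad w_x|_{\partial M}=0,\qquad w_x\to 0\ \text{ at infinity},
\end{equation*}
using that for non-critical weights $\delta\in(2-n,0)$ the Dirichlet Laplacian $\Delta_g\colon W^{2,p}_{\delta}(M)\to L^{p}_{\delta-2}(M)$ on such a manifold is an isomorphism: the Fredholm property is the asymptotically flat analogue of Propositions \ref{prop-weight-elliptic-estimate}--\ref{prop-Fredholm} (weighted estimates at the Euclidean end together with boundary Schauder/$L^p$ estimates at $\partial M$), while injectivity and surjectivity for such $\delta$ follow from the maximum principle (a decaying $g$-harmonic function vanishing on $\partial M$ is $\equiv 0$) and the index computation. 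Since $F_x$ is smooth with compact support in $\mathring M$ it lies in every $L^p_{\delta-2}$, so $w_x\in W^{2,p}_{\delta}$ for all $p$, is smooth up to $\partial M$ by elliptic regularity, and $G(x,\cdot):=G_0(x,\cdot)+w_x$ satisfies $\Delta_{g,\cdot}G(x,\cdot)=-\delta_x$ in the distributional sense, is smooth on $(\mathring M\times\mathring M)\setminus D$, vanishes on $\partial M$, and decays at infinity.

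The remaining basic properties are routine. The identity $\int_M G(x,y)\Delta f(y)\,d\vol_g(y)=-f(x)$ is Green's formula applied to $G(x,\cdot)$ and $f$ on $M\setminus B_\epsilon(x)$ as $\epsilon\to0$, the boundary terms over $\partial M$ and at infinity vanishing by the hypotheses on $f$ and the decay of $G$. For positivity, $G(x,\cdot)$ is $g$-harmonic on $\mathring M\setminus\{x\}$, tends to $+\infty$ at $x$ and to $0$ at infinity, and vanishes on $\partial M$, so it cannot have a negative interior minimum; hence $G(x,\cdot)\ge0$, and $>0$ on $\mathring M\setminus\{x\}$ by the strong maximum principle. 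For symmetry, apply Green's second identity to $G(x,\cdot)$ and $G(y,\cdot)$ on $(\{\rho<R\})\setminus(B_\epsilon(x)\cup B_\epsilon(y))$: the integrals over $\partial M$ and over $\{\rho=R\}$ vanish (Dirichlet condition, respectively decay as $R\to\infty$), while the small-sphere integrals give $G(x,y)-G(y,x)=0$ as $\epsilon\to0$.

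For the expansion at the asymptotically flat end, fix $x_0\in M_\infty$ and work in the asymptotically flat coordinates with $\rho=\|x\|$. On $\{\rho>2\|x_0\|\}$ the function $G(\cdot,x_0)$ is $g$-harmonic and decaying. Writing $E(x):=\|x-x_0\|^{2-n}$ in these coordinates, the decay $g=g_{\mathbb{R}^n}+O(\rho^{-\tau})$ together with the controlled derivatives of the metric gives $\Delta_g E=O(\|x-x_0\|^{-n-\tau})$; hence, for a suitable constant $C>0$ (the coefficient of the $\rho^{2-n}$ mode, positive since $G>0$), the residual $G(\cdot,x_0)-C\,E$ solves, outside a compact set, an equation with this fast-decaying right-hand side and itself decays faster than $\rho^{2-n}$. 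Since the interval $(1-n,2-n)$ contains no critical weight, the weighted expansion for $\Delta_g$ on asymptotically flat ends yields
\begin{equation*}
G(x,x_0)=\frac{C}{\|x-x_0\|^{\,n-2}}+O\!\left(\frac{1}{\|x-x_0\|^{\,\min\{n-1,\,k-2+\tau\}}}\right),
\end{equation*}
where the exponent $n-1$ reflects the dipole correction incurred by centering the leading term at $x_0$ rather than at the coordinate origin, and $k-2+\tau$ the size of $\Delta_g E$, limited by the decay order $\tau$ and the number $k$ of controlled metric derivatives. Differentiating this equation and running rescaled interior Schauder estimates on the dyadic annuli $\{2^j\le\|x-x_0\|\le 2^{j+1}\}$ gives $\|\nabla G(x,x_0)\|=O(\|x-x_0\|^{1-n})$.

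The step I expect to demand the most care is the isomorphism of the Dirichlet Laplacian on the weighted spaces over an asymptotically flat manifold with compact boundary: one must check that the critical-weight analysis at the Euclidean end combines cleanly with the Dirichlet condition on $\partial M$ so that surjectivity holds for every non-critical $\delta\in(2-n,0)$. Once that is in hand, everything else is the maximum principle and standard weighted estimates. An alternative that bypasses it is the exhaustion argument: let $\Omega_R=\{\rho<R\}$, take the Dirichlet Green's functions $G_R$ on these bounded smooth domains, observe $R\mapsto G_R$ is nondecreasing by the maximum principle, and bound $G_R$ away from the diagonal uniformly in $R$ by a fixed positive superharmonic barrier $\psi\sim\rho^{2-n}$ at infinity — the non-parabolicity of asymptotically flat manifolds — so that $G:=\lim_{R\to\infty}G_R$ exists with all the stated properties; there the delicate point is exactly this $R$-independent bound.
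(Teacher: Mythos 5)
Your primary construction---a local parametrix $c_n\chi\, d_g(x,\cdot)^{2-n}$ plus a correction term obtained from the weighted Fredholm/isomorphism theory for the Dirichlet Laplacian on an AF manifold with compact boundary---is sound, but it is genuinely different from what the paper does. The paper's proof is exactly the exhaustion argument you relegate to an ``alternative'' in your last sentences: it takes Dirichlet Green's functions on a compact exhaustion $\{\Omega_i\}$ chosen with $\partial\Omega_i\supset\partial M$, obtains the $i$-independent bound away from the diagonal from the explicit positive superharmonic barrier $\|x-x_0\|^{2-n-\epsilon}$ on the AF end (the non-parabolicity criterion of Theorem 17.3 in \cite{Li-book}), and passes to the limit; the expansion at infinity and the gradient estimate are then quoted from Proposition 11 in \cite{DaiSun-PMT} rather than re-derived. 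Your route buys a self-contained derivation of the asymptotics from the weighted expansion theory and an explicit description of $G(x,\cdot)$ near the pole; the paper's route avoids setting up the weighted isomorphism with boundary entirely, at the price of outsourcing the asymptotics.

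Two points in your argument need repair. First, the error $F_x(y)=\Delta_{g,y}G_0(x,y)+\delta_x$ of the one-term parametrix is \emph{not} smooth: in normal coordinates centered at $x$ one has $\Delta_g r^{2-n}=O(r^{2-n})$ away from the pole, so $F_x$ blows up like $d_g(x,y)^{2-n}$ at $y=x$. Either use a higher-order Hadamard parametrix to make the error continuous, or observe that $F_x\in L^p$ for some $p>1$ already suffices to solve for $w_x$, with smoothness of $G$ off the diagonal recovered afterwards from $\Delta_g G(x,\cdot)=0$ there by elliptic regularity. Second, $G>0$ only gives $C\ge 0$ for the leading coefficient; strict positivity follows from the flux identity
\begin{equation*}
\int_{S_R}\partial_\nu G\,dS=-1+\int_{\partial M}\partial_\nu G\,dS\le -1,
\end{equation*}
valid since $G=0$ on $\partial M$ and $G>0$ inside force $\partial_\nu G\le 0$ there, combined with $\int_{S_R}\partial_\nu G\,dS\to-(n-2)\omega_{n-1}C$ as $R\to\infty$.
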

\begin{proof}
Recall that the existence of the Green's function on complete manifold without boundary is proved in Theorem in \cite{Li-book} by taking limit for Green's functions on a compact exhaustion $\{\Omega_i\}^\infty_{i=1}$ of the manifold. We note that this proof can be adapted to the complete manifolds with boundary. We can take a compact exhaustion such that $\partial \Omega_i \supset \partial M$ for all $i$. For AF manifolds, we can take $f(x):= \|x-x_0\|^{2-n-\epsilon}$ on the AF end as a positive superharmonic function in Theorem 17.3 in \cite{Li-book}, where $x_0$ is a fixed point on the AF end, $\|x - x_0\|$ is the Euclidean distance between $x$ and $x_0$, and $\epsilon$ is a small positive number. For the detail of the proof, we refer to the proof of Theorem 17.3 in \cite{Li-book}. Moreover, the asymptotic behavior of the Green's function at AF infinity is proved in Proposition 11 in \cite{DaiSun-PMT}.
\end{proof}

Now as analysis preparation for proving Proposition \ref{prop-compactification-step1}, we solve an equation in the following lemma.
\begin{lem}\label{prop-compactification-solve-equation}
Let $(M^n, g, o)$ be an $n$-dimensional asymptotically flat manifold with an isolated conical singularity at $o$. There exists a constant $\epsilon_0 = \epsilon(g) $, such that if $f$ is a smooth function with compact support in $M_{\infty}$ and $ \| f_{-} \|_{L^{\frac{n}{2}}} < \epsilon_0$, then the equation
\begin{equation}\label{EQu}
\begin{cases}
\Delta_g u - fu =0 \ \ \text{on}  \ \ M, \\
u \to 1 \ \ \text{as} \ \ x \to \infty
%u \to 1 \ \ \text{as} \ \ r \to 0,
\end{cases}
\end{equation}  
has a unique positive solution. Moreover, near the asymptotically flat infinity, $u$ satisfying:
\begin{equation}\label{eqn-asymptotic-AF}
u = 1 -\frac{A}{\rho^{n-2}} + \omega, \ \ \text{as} \ \ \rho \to \infty,
\end{equation}
where $A = C \int_{M} fu d\vol_g$ for some constant $ C>0$, $\rho$ is the Euclidean distance to a base point, and $\omega$ satisfies
\begin{equation}\label{eqn: control of higher order term of u}
|\omega| \leq \frac{C}{1+\rho^{n-1}}, \ \ |\nabla \omega| \leq \frac{C}{1+\rho^{n}}, \ \ |\nabla^2 \omega | \leq \frac{C}{1+\rho^{n+1}}.
\end{equation}
Near the conically singular point $o$, the solution $u$ has the asymptotic as
\begin{equation}\label{eqn-asymptotic-cone}
u  = B + o(r^\delta), \ \ \text{as} \ \ r \to 0,
\end{equation}
for a constant $B\geq 1$ and $\delta >0$, where $r$ is the radial variable on the model cone and $r=0$ corresponds to the conical singularity.
\end{lem}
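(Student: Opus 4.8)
\textbf{Proof proposal for Lemma \ref{prop-compactification-solve-equation}.}

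The plan is to solve \eqref{EQu} by a Green's function argument combined with a fixed-point/contraction scheme that exploits the smallness of $\|f_-\|_{L^{n/2}}$, and then to extract the two asymptotic expansions (at the asymptotically flat end and at the conical singularity) from the integral representation and the weighted elliptic theory of Sections \ref{sect-analysis} and \cite{DSW-PMT-nonspin}. First I would reduce to a problem with zero boundary data at infinity: write $u = 1 + w$, so that $w$ solves $\Delta_g w - f w = f$ with $w \to 0$ at infinity. Using the Dirichlet-type Green's function $G(x,y)$ of Proposition \ref{prop-Green-function} on the asymptotically flat manifold (here with no boundary, or after capping/removing a conical neighborhood as in the definition of $M_0 \cup M_\infty$), one sets up the integral equation $w(x) = -\int_M G(x,y)\big(f(y) + f(y)w(y)\big)\,d\vol_g(y)$, i.e. $w = T w$ where $T$ is affine. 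The key analytic input is that convolution against $G$ maps $L^{n/2}$-type weights to $L^\infty$ with small operator norm when the $L^{n/2}$ norm of the potential is small; this is precisely where $\epsilon_0 = \epsilon_0(g)$ enters, chosen so that $\|f\|_{L^{n/2}}$ small forces $T$ to be a contraction on a ball in $C^0_{-\tau}$ (functions decaying like $\rho^{-(n-2)}$ at infinity and bounded near $o$). Positivity of the resulting $u$ follows from the maximum principle: since $f$ has compact support away from $o$ and away from infinity, and $\|f_-\|_{L^{n/2}}$ is small, any nonpositive minimum would have to be attained in the interior where $\Delta_g u = f u$, and a Moser iteration / strong maximum principle argument (using smallness of $f_-$ to rule out the bad sign) forces $u > 0$; uniqueness follows from the same contraction, or from subtracting two solutions and applying the maximum principle to the homogeneous equation.

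For the asymptotic behavior \eqref{eqn-asymptotic-AF}–\eqref{eqn: control of higher order term of u} at the AF end, I would plug the expansion of $G(x,x_0)$ from Proposition \ref{prop-Green-function} into the integral representation $u(x) = 1 - \int_M G(x,y) f(y) u(y)\,d\vol_g(y)$. Since $f u$ has compact support, for $\rho = \|x\|$ large we get $u(x) = 1 - \big(\int_M f u\big)\,\frac{C}{\rho^{n-2}} + (\text{error})$, with the error controlled by the $O(\rho^{-\min\{n-1,\,k-2+\tau\}})$ remainder in the Green's function expansion and the gradient estimate $\|\nabla G\| = O(\rho^{1-n})$; differentiating under the integral sign (legitimate because the support of $f u$ is compact) gives the stated bounds on $\omega$, $\nabla\omega$, $\nabla^2\omega$. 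This sets $A = C\int_M f u\,d\vol_g$ as claimed. For the behavior \eqref{eqn-asymptotic-cone} near the conical singularity, note that $f \equiv 0$ in a neighborhood $U_o$ of $o$, so there $u$ is $g$-harmonic, $\Delta_g u = 0$. Then the weighted elliptic estimates of Proposition \ref{prop-weight-elliptic-estimate} together with the bootstrap already carried out in the proof of Lemma \ref{lem-conformal-laplacian-solution} (and the partial expansion results from \cite{DW-2022}, \cite{DSW-PMT-nonspin}) apply to the operator $\Delta_g$ near $o$: the indicial roots are the $\nu_j^\pm$ of \eqref{eqn-nu-defn} with the potential term dropped, and since $u$ is bounded, $u \in L^2_\delta(U_o)$ for all $\delta < 0$, so the expansion forces a leading term $B r^0 = B$ (the indicial root $\nu_0^+ = 0$, coming from $\lambda_0 = 0$ on the cross section when $\Sc_{g^N} = (n-1)(n-2)$, or more generally the lowest nonnegative root above the decay threshold) plus a genuine $o(r^\delta)$ remainder for some $\delta > 0$. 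That $B \geq 1$ follows from the maximum principle: $u$ attains its infimum; if that infimum were $< 1$ it would be attained away from infinity, but on $U_o \setminus \{o\}$ the harmonic $u$ has no interior minimum unless constant, and on $M \setminus U_o$ one uses $\Delta_g u = f u \geq 0$ where $f \geq 0$... more carefully, since $u \to 1$ at infinity and the only place $f$ could be negative is controlled by the smallness hypothesis, a barrier argument shows $u \geq 1$ near $o$, hence $B \geq 1$.

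The main obstacle I expect is making the contraction/fixed-point argument genuinely uniform across \emph{both} ends simultaneously — i.e. choosing the function space so that the Green's operator $T$ is simultaneously bounded with small norm on the AF end (decay $\rho^{2-n}$) and well-behaved near the conical tip (boundedness, with the right weighted regularity). The AF end is standard (this is Lohkamp's setting), and the conical end is handled by \cite{DSW-PMT-nonspin}, but one must check that the Green's function of Proposition \ref{prop-Green-function}, constructed on the \emph{AF manifold with boundary} via compact exhaustion, behaves correctly near $o$ — in particular that integrating $G(x,\cdot)$ against the compactly supported $f u$ produces something in the weighted space $L^2_\delta(U_o)$ with $\delta$ in the noncritical range, so that Proposition \ref{prop-weight-elliptic-estimate} can be invoked. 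A secondary technical point is justifying differentiation under the integral sign and the interchange of limits in the exhaustion when reading off the precise remainder exponents in \eqref{eqn: control of higher order term of u}; this is routine given the decay of $G$ and $\nabla G$ but needs the compact support of $f$ to avoid boundary-at-infinity contributions. Once these are in place, the existence, uniqueness, positivity, and both expansions follow as outlined.
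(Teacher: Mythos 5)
Your overall architecture --- reduce to $u = 1 + w$ with $w \to 0$ at infinity, represent $w$ via the Green's function to read off the coefficient of $\rho^{2-n}$, and treat the cone tip by local harmonicity plus weighted elliptic estimates and indicial roots --- is essentially the route the paper takes (the paper sets $v = 1-u$ and outsources existence and both asymptotic expansions to Lemma 5.2 of \cite{DSW-PMT-nonspin}, devoting its own argument only to the identification $A = C\int_M fu\,d\vol_g$). However, there is one genuine gap in your existence step. You propose to make the affine map $T$ a contraction by choosing $\epsilon_0$ so that ``$\|f\|_{L^{n/2}}$ small'' controls the operator norm, but the hypothesis only bounds $\|f_-\|_{L^{n/2}}$. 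In the application (Proposition \ref{prop-compactification-step1}), $f$ is a multiple of the truncated scalar curvature, whose positive part need not be small in any norm, so a contraction based on the full potential fails. The correct mechanism is sign-sensitive: the quadratic form $\int_M \bigl(|\nabla w|^2 + f w^2\bigr)$ is coercive because $\int_M f w^2 \geq -\int_M f_- w^2 \geq -C\,\|f_-\|_{L^{n/2}}\|\nabla w\|_{L^2}^2$ by H\"older and the Sobolev inequality; existence (via exhaustion or a variational argument) and uniqueness both flow from this coercivity, with the positive part of $f$ absorbed by its favorable sign rather than by smallness. Your uniqueness and positivity arguments should be recast the same way; a pure maximum-principle argument is delicate precisely because $f$ changes sign.

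A second, more minor issue: the Green's function of Proposition \ref{prop-Green-function} is constructed only on the AF end $M_\infty$, a manifold with boundary, and nothing is asserted about a Green's function on all of $M$ near the conical tip. Your integral equation $w(x) = -\int_M G(x,y)\bigl(f(y) + f(y)w(y)\bigr)\,d\vol_g(y)$ over the whole of $M$ therefore presupposes an object that has not been constructed. You flag this yourself, but the resolution is simpler than you suggest: since ${\rm supp}\,f \subset M_\infty$, one applies the representation formula on $M_\infty$ alone to a cut-off $\phi v$ supported far out on the AF end, as the paper does. The commutator terms involve $\nabla\phi$ and $1-\phi$, are supported in a fixed compact annulus, and are annihilated in the limit $\lim_{|x|\to\infty}|x|^{n-2}(\cdot)$ by the gradient decay $\|\nabla G(x,\cdot)\| = O(\|x\|^{1-n})$. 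This both identifies $A$ and avoids any analysis of $G$ near $o$. With these two repairs your outline matches the intended proof.
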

\begin{proof}
The existence of the solution and the asymptotic behavior in (\ref{eqn-asymptotic-AF}) and (\ref{eqn-asymptotic-cone}) have been proved in Lemma 5.2 in \cite{DSW-PMT-nonspin}; and for the proof details, we refer to the proof that lemma. Here, we mainly prove that the constant $A$ in (\ref{eqn-asymptotic-AF}) can be expressed as $A = C \int_{M} fu d\vol_g$ for some constant $C>0$.
 
First of all, by setting $v=1-u$, we convert the equation \eqref{EQu} into
 \begin{equation}\label{FNEQ}
    \begin{cases}
      \Delta_{g} v-fv=-f \ \ \text{on}\; M, \\
      v\to 0 \ \ \text{as}\; x \to \infty.
     % v\to 0  \ \ \text{as}\; r \to 0.
    \end{cases}
   \end{equation}
Then an approximation argument by using a compact exhaustion shows the existence of the solution to (\ref{FNEQ}), see the proof of Lemma 5.2 in \cite{DSW-PMT-nonspin} for the details.

In the following, we use the asymptotic estimate of the Green's function $G$ and $\nabla G$ in Proposition \ref{prop-Green-function} to obtain the asymptotic behavior of the solution $v$. 
 %\todo{What do you mean here? Please clarify, Just construct a cut off function to make the $M_{0}$ no useful} 
 Let $0< \rho_1 < \rho_2$ be sufficiently large, and $\phi$ be a smooth cut-off function on  the AF end $M_\infty$, which is an AF manifold with boundary, such that
 \begin{equation*}
 \phi =
     \begin{cases}
     1, & \text{on} \, M_\infty \setminus B_{\rho_2}(0), \\
     0, & \text{on} \, M_\infty \cap B_{\rho_1}(0).
     \end{cases}
 \end{equation*}
Let $G$ be the Green's function on $M_{\infty}$ obtained in Proposition \ref{prop-Green-function}. Then for any $x\in M_\infty$, we have
\begin{eqnarray*}
% \nonumber % Remove numbering (before each equation)
  & & (\phi v)(x) \\
  &=&\int_{M_\infty} G(x, z) \Delta_{g}(\phi v)(z)d\operatorname{vol}_{g}(z) \\
  &=& \int_{M_\infty}\left[\phi(z) G(x,z) (fv-f)(z)+2G(x,z)\langle\nabla\phi,\nabla v\rangle(z)+G(x,z)(\Delta \phi)(z)v(z)\right]d\operatorname{vol}_{g}(z)\\
 &=& \int_{M_\infty}\left[\phi(z) G(x,z) (fv-f)(z)-G(x,z)\langle\nabla(1-\phi),\nabla v\rangle\right(z)]d\operatorname{vol}_{g}(z)\\
 &&-\int_{M_\infty}\langle\nabla G(x,z),\nabla \phi(z)\rangle v(z)d\operatorname{vol}_{g}(z)\\
  &=& \int_{M_\infty}\left[\phi(z) G(x,z) (fv-f)(z)+G(x,z)(1-\phi)\Delta v\right]d\operatorname{vol}_{g}(z)\\
 && + \int_{M_\infty}(1-\phi(z))\langle\nabla G(x,z),\nabla v(z)\rangle d\operatorname{vol}_{g}(z)
 -\int_{M_\infty}\langle\nabla G(x,z),\nabla \phi(z)\rangle v(z)d\operatorname{vol}_{g}(z)\\
 %&&+\int_{\partial B_{\rho_{1}}}(1-\phi) G\langle\nu, \nabla v\rangle dS\\
  &=& \int_{M_\infty}G(x,z)(fv-f)(z)d\operatorname{vol}_{g}(z)+\int_{M_\infty}(1-\phi(z))\langle\nabla G(x,z),\nabla v(z)\rangle d\operatorname{vol}_{g}(z)\\
  &&-\int_{M_\infty}\langle\nabla G(x,z),\nabla \phi(z)\rangle v(z)d\operatorname{vol}_{g}(z).
\end{eqnarray*}
Therefore
\begin{eqnarray*}
% \nonumber % Remove numbering (before each equation)
  \lim_{|x|\to\infty}|x|^{n-2}v((x)) &=& \int_{M_\infty}\lim_{|x|\to\infty}|x|^{n-2}G(x,z)(fv-f)(z)d\operatorname{vol}_{g}(z)\\
  &&-\int_{M_\infty}\lim_{|x|\to\infty}|x|^{n-2}\langle\nabla G(x,z),\nabla \phi(z)\rangle v(z)d\operatorname{vol}_{g}(z)\\
  &&+\int_{M_\infty}\lim_{|x|\to\infty}|x|^{n-2}(1-\phi(z))\langle\nabla G(x,z),\nabla v(z)\rangle d\operatorname{vol}_{g}(z).\\
  &=&C\int_{M}(fv-f)d\operatorname{vol}_{g}
\end{eqnarray*}
for $C>0$. Here we used ${\rm supp}(f) \subset M_\infty$ and the asymptotic estimates of the Green's function $G$ and its gradient $\nabla G$ in Proposition \ref{prop-Green-function}.

\end{proof}

Then we complete the first step of Schoen-Yau-Lohkamp compactification argument with a conical singularity in the following proposition.
\begin{prop}\label{prop-compactification-step1}
Let $(M^n, g, o)$ be an asymptotically flat manifold with a single conical singularity at $o$. Assume the scalar curvature $\Sc_g \geq 0$ and ADM mass $m(g) <0$. Then there exists an asymptotically flat metric $\tilde{g}$ on $M$ with a single conical singularity at $o$ such that $\tilde{g} = \tilde{u}^{\frac{4}{n-2}} g_{\R^n}$ outside a compact set, with $\tilde{u} = 1 + \frac{\tilde{m}}{\rho^{n-2}} + O(\rho^{1-n})$ for some $\tilde{m} <0$. Moreover, $\Sc_{\tilde{g}} \geq 0$ on $M$ and $\Sc_{\tilde{g}} =0$ outside a large compact set.
\end{prop}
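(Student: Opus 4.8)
The plan is to follow the classical Schoen--Yau--Lohkamp recipe, keeping track of the extra care required near the conical point. The idea is to conformally modify $g$ using a solution of a linear equation whose potential is a suitably truncated multiple of the scalar curvature, then splice in the Euclidean Green's function near infinity. First I would fix a large radius and choose a smooth cutoff $\chi$ supported in $M_\infty$, equal to $1$ outside a large coordinate ball, and set $f := \tfrac{n-2}{4(n-1)} \chi \, \Sc_g$. Since $\Sc_g \geq 0$ we have $f_- \equiv 0$, so the smallness hypothesis $\|f_-\|_{L^{n/2}} < \epsilon_0$ of Lemma~\ref{prop-compactification-solve-equation} is trivially satisfied. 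Applying that lemma produces a positive solution $u$ of $\Delta_g u - f u = 0$ with $u \to 1$ at infinity, the expansion $u = 1 - A\rho^{2-n} + \omega$ with $A = C\int_M f u \, d\vol_g \geq 0$ and the higher-order control \eqref{eqn: control of higher order term of u}, and the asymptotic $u = B + o(r^\delta)$ with $B \geq 1$ near $o$.

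Next I would set $g_1 := u^{4/(n-2)} g$ on $M \setminus \{o\}$. The conformal transformation law for scalar curvature gives
\begin{equation}
\Sc_{g_1} = \tfrac{4(n-1)}{n-2} u^{-\frac{n+2}{n-2}}\left( -\Delta_g u + \tfrac{n-2}{4(n-1)}\Sc_g u \right) = \tfrac{4(n-1)}{n-2} u^{-\frac{n+2}{n-2}}\left( \tfrac{n-2}{4(n-1)}\Sc_g - f \right) u,
\end{equation}
which is $\geq 0$ everywhere (the bracket equals $\tfrac{n-2}{4(n-1)}(1-\chi)\Sc_g \geq 0$) and vanishes outside the support of $1-\chi$, i.e. outside a large compact set. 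Near $o$ the conformal factor $u^{4/(n-2)} = B^{4/(n-2)} + o(r^\delta)$ is a positive smooth-up-to-lower-order perturbation of a constant, so $g_1$ is again a metric with an isolated conical singularity at $o$ of the same type (one checks the asymptotic control \eqref{eqn-cone-metric-asymptotic} is preserved, possibly with a smaller exponent $\min\{\alpha,\delta\}$). On the AF end, $u = 1 - A\rho^{2-n} + \omega$ with $\omega = O(\rho^{1-n})$ and matching derivative bounds, so $g_1$ is asymptotically flat; moreover $\Sc_{g_1} \equiv 0$ there forces $\Delta_{g_{\R^n}} u = 0$ (to leading order) on the end after identifying the metric with the Euclidean one. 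The ADM mass of $g_1$ computes from the $\rho^{2-n}$ coefficient: a standard computation (as in \cite{Lohkamp1999, SY-PMT}) shows $m(g_1) = m(g) - c_n A$ for a positive dimensional constant $c_n$; since $m(g) < 0$ and $A \geq 0$ we get $m(g_1) < 0$.

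It remains to replace $u$ near infinity by an exact Euclidean harmonic function so that $g_1$ becomes exactly conformal to $g_{\R^n}$ outside a compact set. For this I would write, on the Euclidean end, $u = 1 + \tilde m \rho^{2-n} + (\text{error})$ and note $\Sc_{g_1} = 0$ there already gives $\Delta_{g_{\R^n}} u = 0$ on the end up to the decay of $g - g_{\R^n}$; the clean way is to invoke Proposition~\ref{prop-Green-function}, or directly solve $\Delta_{g_{\R^n}} \tilde u = 0$ with boundary data $u$ on a large sphere and $\tilde u \to 1$, obtaining $\tilde u = 1 + \tilde m \rho^{2-n} + O(\rho^{1-n})$ with $\tilde m < 0$ (the sign of $\tilde m$ is inherited from $m(g_1) < 0$ via the mass formula). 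One then interpolates between $u$ and $\tilde u$ on an annulus; because both are (nearly) harmonic and close in $C^2$, the interpolated function $\tilde u$ can be arranged to keep $\Delta_{g}\tilde u - f\tilde u \leq 0$ there, hence $\Sc_{\tilde g} \geq 0$ for $\tilde g := \tilde u^{4/(n-2)} g$, with $\tilde g = \tilde u^{4/(n-2)} g_{\R^n}$ outside a compact set and $\Sc_{\tilde g} = 0$ there. The metric $\tilde g$ still agrees with $g$ (up to a constant conformal factor near $o$) near the cone point, so it has a single isolated conical singularity at $o$, completing the proof. The main obstacle I anticipate is the interpolation step: one must ensure that gluing the true solution $u$ to the model harmonic function $\tilde u$ does not destroy the sign $\Sc_{\tilde g} \geq 0$ on the gluing annulus. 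This is exactly where Lohkamp's observation that $\Delta_{g_{\R^n}} u = 0$ on the end (a consequence of $\Sc_{g_1} = 0$ there) together with the strict negativity $\tilde m < 0$ enters; the cone point plays no role here since everything happens on the AF end, so this part is literally \cite[Proposition 6.1]{Lohkamp1999}. The genuinely new bookkeeping is checking that each conformal change preserves the conical asymptotics of Definition~\ref{defn-conic-mfld}, which follows from the expansion $u = B + o(r^\delta)$ in Lemma~\ref{prop-compactification-solve-equation}.
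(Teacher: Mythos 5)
Your overall strategy (conformal change by a solution of a linear equation with a truncated scalar-curvature potential) is the right one, but there is a genuine gap: your construction never produces a metric that is \emph{exactly} of the form $\tilde u^{4/(n-2)} g_{\R^n}$ outside a compact set, which is the key conclusion of the proposition. Your metric $g_1 = u^{4/(n-2)} g$ is scalar flat on the end, but the underlying tensor is still $g$, which differs from $(1+m_1\rho^{2-n})^{4/(n-2)} g_{\R^n}$ by a non-conformal remainder $\bar g$ decaying only like $\rho^{-\tau}$. Your proposed ``interpolation'' step replaces the conformal factor $u$ by a Euclidean-harmonic function $\tilde u$, but it does nothing to the tensor $\bar g$; the result is $\tilde u^{4/(n-2)} g$, not $\tilde u^{4/(n-2)} g_{\R^n}$. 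The paper handles this by first cutting off $\bar g$: it defines $g^\sigma$, which agrees with $g$ on a compact set and equals $(1+m_1\rho^{2-n})^{4/(n-2)} g_{\R^n}$ exactly for $\rho\geq 3\sigma$, and only then solves the conformal equation, with respect to $g^\sigma$, with potential $\varphi\,\Sc_{g^\sigma}$ supported on the transition annulus.

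This reordering is not cosmetic. Cutting off $\bar g$ creates scalar curvature of \emph{indefinite sign} of size $O(\sigma^{-\tau-2})$ on the annulus $2\sigma\le\rho\le 3\sigma$, so the potential in the linear equation is not nonnegative; the smallness hypothesis $\|(\varphi\Sc_{g^\sigma})_-\|_{L^{n/2}}<\epsilon_0$ of Lemma \ref{prop-compactification-solve-equation} is genuinely needed (and is arranged by taking $\sigma$ large), contrary to your remark that it is ``trivially satisfied'' because $\Sc_g\ge 0$. Relatedly, your $f=\tfrac{n-2}{4(n-1)}\chi\,\Sc_g$ is not compactly supported (it equals a multiple of $\Sc_g$ outside a ball, and $\Sc_g$ only decays like $\rho^{-\tau-2}$), so Lemma \ref{prop-compactification-solve-equation} does not apply as stated, and the clean expansion $u=1-A\rho^{2-n}+O(\rho^{1-n})$ can fail when $\tau<n-2$. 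Finally, the truncation of a harmonic conformal factor that you allude to at the end belongs to the second step of the Lohkamp compactification (sketched in the appendix's preamble), not to this proposition. The fix is to adopt the paper's order of operations: absorb the conformally flat leading term, cut off $\bar g$ to form $g^\sigma$, solve $\Delta_{g^\sigma}u=\tfrac{n-2}{4(n-1)}\varphi\,\Sc_{g^\sigma}u$ with $u\to 1$, and set $\tilde g=u^{4/(n-2)}g^\sigma$; the sign of $\tilde m$ then follows from showing $|A_\sigma|\to 0$ as $\sigma\to\infty$. Your bookkeeping near the cone point (the expansion $u=B+o(r^\delta)$ preserving the conical asymptotics) is fine and matches the paper.
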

\begin{proof}
    Following the proof of Proposition 4.11 in \cite{CLSZ} and Proposition 3.2 in \cite{Zhu-IRMN-23}, write the metric $g$ as
   \begin{equation}
   % \nonumber % Remove numbering (before each equation)
     g = (1+\tfrac{m_{1}}{\rho^{n-2}})^{\tfrac{4}{n-2}}g_{\mathbb{R}^n}+\bar{g}
   \end{equation}
outside a large compact set $o \in K$ with $m_{1} :=\frac{1}{4(n-1)}m(g) <0$, and 
   \begin{eqnarray}\label{eqnmasszero}
   % \nonumber % Remove numbering (before each equation)
     \lim_{\rho\to\infty}\int_{S_{\rho}}\sum_{i,j}(\partial_{i}\bar{g}_{ij}-\partial_{j}\bar{g}_{ii})\tfrac{x^{j}}{\rho}dS_{\rho}&=&0.
   \end{eqnarray}
     Let $\phi(\rho)$ be a cut-off function $\phi(\rho)=1$ for $\rho\leq 2$, and $\phi(\rho)=0$ for $\rho\geq 3$, and $0\leq\phi(\rho)\leq 1$.
Define the metric 
   \begin{equation}
   g^{\sigma}= 
           \begin{cases}
              g, & \text{on} \ \ K, \\
             (1+\tfrac{m_{1}}{\rho^{n-2}})^{\frac{4}{n-2}}g_{\mathbb{R}^n}+\phi(\tfrac{\rho}{\sigma})\bar{g}, & \text{on} \ \ M \setminus K.
            \end{cases} 
   \end{equation}
Then $\Sc_{g^{\sigma}}$ is a smooth function with support in $K \cup B_{3\sigma}$. More precisely,
\begin{eqnarray*}
% \nonumber % Remove numbering (before each equation)
  \Sc_{g^{\sigma}} &=&\begin{cases}
                       \Sc_{g}, & \mbox{if } \rho\leq 2\sigma \ \ \text{and on } K, \\
                       O(\sigma^{-\tau-2}), & \mbox{if }2\sigma\leq \rho\leq 3\sigma, \\
                       0, & \mbox{otherwise}.
                     \end{cases}
\end{eqnarray*}  
Consider the equation:
\begin{equation}\label{zeroscalarcurvature}
\begin{cases}
   \Delta_{g^{\sigma}}u-\tfrac{n-2}{4(n-1)}\varphi \Sc_{g^{\sigma}}u = 0, \\
   u \to 1\quad \text{as} \quad x\to\infty, 
%&   u \to  1\quad \text{as} \quad x\to o,
\end{cases}
\end{equation}
where $\varphi$ is cut-off function such that $\varphi(\rho)=1$ for $2\sigma\leq \rho\leq 3\sigma$, and $\varphi(\rho)=0$ for $0\leq \rho\leq \sigma$, $4\sigma\leq \rho<\infty$, and on $K$.
Choose $\sigma$ sufficiency large to make $(\int_{M}|(\varphi \Sc_{g^{\sigma}})_{-}|^{n/2}d\operatorname{vol}_{g^{\sigma}})^{2/n}=O(\sigma^{-\tau})$ small. Then by Lemma \ref{prop-compactification-solve-equation}, the equation (\ref{zeroscalarcurvature}) has a positive solution $u$ with asymptotic behavior at asymptotically flat infinity as:
%By Proposition \ref{proposition1}, 
$$u=1-\tfrac{A_{\sigma}}{\rho^{n-2}}+\omega$$
and
$$A_{\sigma}=C\int_{M}\varphi \Sc_{g^{\sigma}}ud\operatorname{vol}_{g^{\sigma}}.$$
Let $\tilde{g}=u^{\frac{4}{n-2}}g^{\sigma}$. By the asymptotic behavior of $u$ as in (\ref{eqn-asymptotic-AF}) and (\ref{eqn-asymptotic-cone}), $\tilde{g}$ is still an asymptotically flat metric with a conical singularity at $o$. Then %for any fix $\sigma$
\begin{eqnarray*}
% \nonumber % Remove numbering (before each equation)
  \Sc_{\tilde{g}} &=& \frac{4(n-1)}{n-2}u^{-\frac{n+2}{n-2}}(-\Delta_{g^{\sigma}} u+\tfrac{n-2}{4(n-1)} \Sc_{g^{\sigma}}u) \\
   &=& \frac{4(n-1)}{n-2}u^{-\tfrac{n+2}{n-2}}(-\Delta_{g^{\sigma}} u+\tfrac{n-2}{4(n-1)}\varphi \Sc_{g^{\sigma}}u+\tfrac{n-2}{4(n-1)}(1-\varphi) \Sc_{g^{\sigma}}u)\\
   &=&u^{-\frac{4}{n-2}}(1-\varphi) \Sc_{g^{\sigma}}\geq 0.
\end{eqnarray*}
Moreover,
$$m(\tilde{g})=-4(n-1)A_{\sigma}+m(g^{\sigma})=-4(n-1)A_{\sigma}+m$$ 
and thus 
\begin{eqnarray}
% \nonumber % Remove numbering (before each equation)
  |m(\tilde{g})-m(g)|=4(n-1)|A_{\sigma}|.
\end{eqnarray}

Then one can show that $|A_{\sigma}|$ can be made arbitrarily small for sufficiently large $\sigma$. Note that for estimating $A_\sigma$ one only needs to work on AF end, so it is the same as smooth case, for the detail see e.g., Proof of step 1 in \cite{DaiSun-PMT}. As a result, $|m(\tilde{g})-m(g)|$ can be arbitrarily small for sufficiently large $\sigma$, so $m(g)<0$ implies $m(\tilde{g})<0$ by taking $\sigma$ sufficiently large. Moreover, note that outside a compact set $\tilde{g} = u^{\frac{4}{n-2}}(1+\frac{m_1}{\rho^{n-2}})^{\frac{4}{n-2}} g_{\mathbb{R}^n} =: \tilde{u}^{\frac{4}{n-2}}g_{\mathbb{R}^n}$, and as $\rho \to \infty$,
\begin{equation*}
\tilde{u} = (1 + \tfrac{m_1}{\rho^{n-2}}) (1 - \tfrac{A_\sigma}{\rho^{n-2}} + O(\rho^{1-n})) = 1 + \tfrac{m_1 - A_{\sigma}}{\rho^{n-2}} + O(\rho^{1-n}).
\end{equation*}
Here $\tilde{m}:= m_1 - A_{\sigma} <0$ for sufficiently large $\sigma$, since $|A_\sigma|$ can be arbitrary small for sufficiently large $\sigma$.
  \end{proof}

%%%%%%%%%%%%%%%%%%%%%%%%%%%%%%%%%%%%%%%%%%%%%%%%%%%%%%%%%%%%%%
\bibliographystyle{plain}
\bibliography{DSW_PSC.bib}

\end{document}